\newcommand{\R}{\mathbb{R}}
\newcommand{\Z}{\mathbb{Z}}
\newcommand{\Q}{\mathbb{Q}}
\newcommand{\D}{\mathbb{D}}
\newcommand{\F}{\mathbb{F}}
\newcommand{\scr}{\mathcal}
\newcommand{\wt}{\widetilde}
\newcommand*\quot[2]{{^{\textstyle #1}\big/_{\textstyle #2}}}
\newcommand{\xtwoheadrightarrow}[2][]{%
  \xrightarrow[#1]{#2}\mathrel{\mkern-14mu}\rightarrow
}
\theoremstyle{plain}
\newtheorem{thm}{Theorem}[section]
\newtheorem{lem}[thm]{Lemma}
\newtheorem{prop}[thm]{Proposition}
\newtheorem{quest}[thm]{Question}
\theoremstyle{definition}
\newtheorem{defn}[thm]{Definition}
\newtheorem{convn}[thm]{Convention}
\newtheorem{exam}[thm]{Example}
\newtheorem{cor}[thm]{Corollary}
\newtheorem*{claim*}{Claim}
\theoremstyle{remark}
\newtheorem{rmk}[thm]{Remark}
\title[Exact submanifold obstructions]{Obstructions for exact submanifolds with symplectic applications}
\author{Kevin Sackel}
\address{Department of Mathematics and Statistics, University of Massachusetts, Amherst, MA 01003}
\email{k.sackel.math@gmail.com \\ ksackel@umass.edu}
\begin{document}
\begin{abstract}
	Suppose $X^{N}$ is a closed oriented manifold, $\alpha \in H^*(X;\R)$ is a cohomology class, and $Z \in H_{N-k}(X)$ is an integral homology class. We ask the following question: is there an oriented embedded submanifold $Y^{N-k} \subset X$ with homology class $Z$ such that $\alpha|_Y = 0 \in H^*(Y;\R)$? In this article, we provide a family of computable obstructions to the existence of such `exact' submanifolds in a given homology class which arise from studying formal deformations of the de Rham complex. In the final section, we apply these obstructions to prove that the following symplectic manifolds admit no non-separating exact (a fortiori contact-type) hypersurfaces: K\"ahler manifolds, symplectically uniruled manifolds, and the Kodaira--Thurston manifold.
\end{abstract}

\maketitle
%\setcounter{tocdepth}{3}
%\tableofcontents
%\newpage

%------------------------------------------------------------------------
%--------------------START-HERE------------------------------------------

\section{Introduction}

Suppose $X^n$ is a closed and oriented manifold of dimension $n$, and let $\alpha \in H^r(X;\R)$ be a fixed cohomology class of homogeneous degree $r$.\footnote{Having $\alpha$ be of homogeneous degree is inessential; the reader will easily check throughout the article that $\alpha$ may be broken into its homogeneous components in all of our results.} The notation $(X,\alpha)$ will always refer to such a pair for the rest of this article. The class $\alpha$ provides extra data which singles out special types of submanifolds, called \emph{exact}.

\begin{defn}
	We will call an embedded oriented hypersurface $Y \subset (X,\alpha)$ \textbf{exact} if $\alpha|_{Y} = 0 \in H^r(Y;\R)$.
\end{defn}

\begin{convn}
	When coefficients are not written, homology and cohomology are taken with integral coefficients, i.e. $H_*(X) = H_*(X;\Z)$ and $H^*(X) = H^*(X;\Z)$.
\end{convn}

\begin{convn}
	Henceforth, we shall always assume without stating that all submanifolds are oriented, so that $[Y] \in H_*(X)$ is a well-defined homology class.
\end{convn}

This article is concerned primarily with the following question.

\begin{quest} \label{quest:main}
	For a given homology class $Z \in H_{N-k}(X)$, when does there exist (or not exist) an embedded exact submanifold $Y^{N-k} \subset (X,\alpha)$ representing $Z$, i.e. with $[Y] = Z$?
\end{quest}

In this article, we find a sequence of obstructions to the existence of an embedded exact submanifold representing a given homology class $Z$. These obstructions arise by studying a certain formal deformation problem for the de Rham complex. Indeed, an exact submanifold yields a geometric deformation, which in turn implies that all of the formal algebraic obstructions must vanish. The algebraic side of the computation is the subject of Section \ref{sec:deform}, the results of which we know discuss.

Assume that $k$ is an odd integer. Fixed a closed differential form $\omega \in \Omega^*(X)$ and also a closed differential $k$-form $\eta \in \Omega^k(X)$. For $0 \leq L \leq \infty$ (NB: including the case of $L=\infty$), we may ask whether there exists a sequence of differential forms $\omega_1,\ldots,\omega_L \in \Omega^*(X)$ such that, setting $\omega_0 := \omega$, we have
$$d\omega_{\ell+1} = \eta \wedge \omega_\ell, \qquad \mathrm{for~all} ~ 0 \leq \ell < L.$$
It turns out that the existence of such a sequence depends only upon  the cohomology classes $[\omega] \in H^*(X;\R)$ and $[\eta] \in H^k(X;\R)$ as opposed to the specific differential forms $\omega$ and $\eta$. We say that $[\omega]$ is \textbf{$L$-jet deformable along $[\eta]$} if such a sequence $\omega_1,\ldots,\omega_L$ exists. This terminology, as well as the independence of representative forms, is clarified by the equivalence of this definition with an alternate one, Definition \ref{def:L-jet_deform}, as proved in Proposition \ref{prop:L-jet_equiv}. With this convenient notation, our main theorem is the following.\\

\begin{thm}\label{thm:main}
	Suppose we have a pair $(X^N,\alpha)$ consisting of a closed oriented manifold $X$ of dimension $N$ and a cohomology class $\alpha \in H^r(X;\R)$. Suppose an integral homology class $Z \in H_{N-k}(X)$ is represented by an exact submanifold, and by abuse of notation, let $PD(Z) \in H^k(X;\R)$ denote the image of the (integral) Poincar\'e dual class $PD(Z) \in H^k(X)$ under the map $H^k(X) \rightarrow H^k(X;\R)$.
	\begin{itemize}
		\item If $k$ is odd, then $\alpha$ is $\infty$-jet deformable in the direction of the $PD(Z)$.
		\item If $r$ is odd, then $PD(Z)$ is $\infty$-jet deformable in the direction of $\alpha$.
		\item Regardless of the parity of $k$ and $r$, we have $PD(Z) \wedge \alpha = 0 \in H^{r+k}(X;\R)$.\\
	\end{itemize}
\end{thm}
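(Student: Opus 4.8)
The plan is to represent both classes by convenient differential forms supported near the exact submanifold and then produce the required sequences directly by repeated wedging with a local primitive. Let $Y^{N-k}\subset X$ be an exact submanifold with $[Y]=Z$, fix an open tubular neighborhood $U$ of $Y$, and choose a closed form $\eta\in\Omega^k(X)$ which is Poincar\'e dual to $Y$ and has $\mathrm{supp}(\eta)$ a closed subset of $X$ contained in $U$ (a Thom form of the normal bundle), so that $[\eta]=PD(Z)\in H^k(X;\R)$. Fix also a closed form $a\in\Omega^r(X)$ representing $\alpha$. The sole geometric input is that, since $\alpha|_Y=0$ and the restriction $H^r(U;\R)\to H^r(Y;\R)$ is an isomorphism ($U$ deformation retracts onto $Y$), the class $[a|_U]$ vanishes, so $a|_U=d\beta$ for some $\beta\in\Omega^{r-1}(U)$. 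The recurring bookkeeping point is that for any form $\mu$ defined on $U$, the product $\eta\wedge\mu$ has support inside $\mathrm{supp}(\eta)\subset U$, hence extends by zero to a smooth global form on $X$; and a differential identity between two such globally-extended forms may be checked on $U$, since both sides vanish off $\mathrm{supp}(\eta)$.

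The third bullet requires no parity hypothesis. By Leibniz, on $U$ one has $\eta\wedge a=\eta\wedge d\beta=(-1)^k d(\eta\wedge\beta)$ (using $d\eta=0$); extending by zero, this says the global form $\eta\wedge a$ equals $d\big((-1)^k\eta\wedge\beta\big)$ on all of $X$. Since the de Rham cup product is represented by the wedge, $PD(Z)\smile\alpha=[\eta]\smile[a]=[\eta\wedge a]=0$. (One can equivalently note that $PD(Z)\smile\alpha$ is the Poincar\'e dual of $(\iota_Y)_*(\alpha|_Y)=0$.)

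For the first bullet, assume $k$ is odd and set $\omega_0:=a$, $\omega_1:=-\eta\wedge\beta$ (extended by zero), and $\omega_\ell:=0$ for $\ell\geq2$. The identity $d\omega_1=\eta\wedge\omega_0$ is the computation above, now with $-(-1)^k=1$; and $d\omega_{\ell+1}=\eta\wedge\omega_\ell$ for $\ell\geq1$ holds because $\eta\wedge\omega_1=-\eta\wedge\eta\wedge\beta=0$, the vanishing being forced by $\eta$ having odd degree. Thus $a$, hence $\alpha$ (by the representative-independence of jet-deformability, Proposition \ref{prop:L-jet_equiv}), is $\infty$-jet deformable along $PD(Z)$. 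For the second bullet, assume $r$ is odd and set $\zeta_0:=\eta$ and $\zeta_\ell:=\tfrac{1}{\ell!}\,\eta\wedge\beta^{\wedge\ell}$ (extended by zero); now $\beta$ has even degree $r-1$, so the Leibniz expansion carries no signs and gives $d(\beta^{\wedge m})=m\,a|_U\wedge\beta^{\wedge(m-1)}$ on $U$, whence $d(\eta\wedge\beta^{\wedge m})=m\,\big(a\wedge\eta\wedge\beta^{\wedge(m-1)}\big)$ globally, which is exactly the relation $d\zeta_{\ell+1}=a\wedge\zeta_\ell$. Hence $PD(Z)$ is $\infty$-jet deformable along $\alpha$. (Conceptually these are the cocycles $\Omega(t)=a-t\,\eta\wedge\beta$ and $Z(t)=\eta\wedge e^{t\beta}$ for the deformed differentials $d-t\,\eta\wedge(-)$ and $d-t\,a\wedge(-)$, which square to zero precisely because $k$, resp.\ $r$, is odd.)

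Rather than a genuine obstacle, the steps that need care are: choosing $\eta$ with support strictly interior to $U$ so that every $\eta\wedge\beta^{\wedge\ell}$ is a bona fide global form and the differential identities hold globally and not merely on $U$; tracking degree parities, where $k$ odd enters only to kill $\eta\wedge\eta$ and $r$ odd only to make $\beta$ even so that $\beta^{\wedge\ell}$ behaves like an ordinary power; and invoking the standard fact that a Thom form of $Y$ represents the real image of the integral class $PD(Z)$, together with Proposition \ref{prop:L-jet_equiv} to reduce everything to the chosen representatives. The one genuinely new idea is simply to guess the right forms — the $\eta\wedge\beta^{\wedge\ell}$ — after which everything is a routine Leibniz computation.
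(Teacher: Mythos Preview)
Your proof is correct and is essentially identical to the paper's own argument: both pick a Thom form $\eta$ for $Y$ supported in a tubular neighborhood $U$, use the deformation retraction $U\to Y$ to find a local primitive of the chosen representative of $\alpha$, and then write down exactly the same sequences (the paper's $\omega+t\beta$ with $\beta=-\eta\wedge\theta$ is your $a-t\,\eta\wedge\beta$, and the paper's $\eta_\ell=\tfrac{1}{\ell!}\theta^\ell\wedge\eta$ is your $\zeta_\ell=\tfrac{1}{\ell!}\eta\wedge\beta^{\wedge\ell}$ up to a sign). The only cosmetic difference is the order of factors in the wedge products and the names of the forms.
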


\begin{rmk} \label{rmk:Steenrod}
	It is not even the case that every homology class $Z \in H_{N-k}(X)$ is represented by a smooth submanifold of $X$, even without considering the data of $\alpha$. The study of realizability of homology classes is known as the Steenrod problem, and was initially studied by Thom \cite{Thom}. Despite the non-realizability of a general class, Thom proved that there always exsits some nonzero constant $c \in \Z$ such that $cZ$ is realized by a submanifold. Our obstructions are scale invariant, i.e. $\alpha$ is $L$-jet deformable in the direction of $PD(Z;\R)$ if and only if $\alpha$ is $L$-jet deformable in the direction of $cPD(Z;\R) = PD(cZ;\R)$. (See Proposition \ref{prop:scale_change}.) Hence, our obstructions in fact prevent every multiple $cZ$ for $c \neq 0$ from having an exact representative, and by Thom's result, this means our obstructions go beyond smooth realizability. See Proposition \ref{prop:1-to-infty} for an application of this line of reasoning.
\end{rmk}

Of consequence to the utility of Theorem \ref{thm:main} is an understanding of the computability of $\infty$-jet deformability. As an approximation, we may ask if a given class is $L$-jet deformable for all finite $L$. This may be thought of as a finite list of obstructions, the first one ($L=1$) recovering the condition $PD(Z) \wedge \alpha = 0$ that holds even if both $k$ and $r$ are even. Towards this purpose, we are in luck: for fixed $\mu \in H^k(X;\R)$ with $k$ odd, the vector subspace of cohomology classes in $H^*(X;\R)$ which are $L$-jet deformable in the direction of $\mu$ is explicitly computable via an inductive procedure on $L$, relying at each step on a finite dimensional linear algebra computation.\footnote{The computation is unfortunately nonlinear in $\mu$.} See Proposition \ref{prop:fin-dim} and the subsequent discussion for the details. Hence, we may utilize Theorem \ref{thm:main} in a very practical manner, inductively checking for each finite $L$ whether $\alpha$ is $L$-jet deformable along $PD(Z)$ if $k$ is odd, or vice versa if $r$ is odd.

In the final section, Section \ref{sec:symp_app}, we provide a number of applications in the setting of symplectic geometry, and especially the case $k=1$. If $(X,\omega)$ is a closed symplectic manifold, then $[\omega] \neq 0 \in H^2(X;\R)$, and hence the $\infty$-jet deformability of $[\omega]$ occurring in the first bullet point of Theorem \ref{thm:main} provides non-trivial obstructions to the existence of exact submanifolds in certain homology classes. Even in the case of $k=1$, we recover a number of symplectic manifolds which admit no non-separating exact hypersurfaces:
\begin{itemize}
	\item K\"ahler manifolds of dimension $\geq 4$ (first obstruction, see Corollary \ref{cor:Kaehler})
	\item Symplectically uniruled manifolds (first obstruction, see Theorem \ref{thm:symp_uniruled})
	\item The Kodaira--Thurston manifold (second obstruction, see Theorem \ref{thm:KT})
\end{itemize}
As a corollary, none of these manifolds admit non-separating contact-type surfaces, a particular type of exact hypersurface well-suited to pseudo-holomorphic curve theory, typically in the form of neck-stretching arguments in symplectic field theory (see \cite{EGH,BEHWZ}). The second statement follows as a corollary of work of Wendl \cite{Wendl}, who himself proved that symplectically uniruled manifolds do not admit non-separating contact-type hypersurfaces (though did not observe the version with exact hypersurfaces). The first and third results appear to be completely new in the literature.

\subsection*{Acknowledgments}

My initial interest in this topic stemmed from conversations with Baptiste Chantraine about Lee classes in locally conformal symplectic geometry. His explanation of work of Apostolov and Dloussky \cite{AD} led immediately to the conclusion of Theorem \ref{thm:Kaehler}, and in due course to the development of this article. A conversation with Rohil Prasad broadened the scope of this paper from just the symplectic setting to the more general setting presented here. Dennis Sullivan convinced me that the equations $d\omega_{\ell+1} = \eta \wedge \omega_{\ell}$ were interesting in their own right and provided further motivation to finish this article. (Relations to rational homotopy theory are relegated to future work.) Francesco Lin provided continual interest and encouragement, and conversations with Mike Sullivan and Umut Varolgunes at the right time convinced me to wrap up the paper. I am thankful for all of their guidance.

\section{Exactness is not homological}

Before proving our main results, let us get a feel for Question \ref{quest:main} from a few examples. For many cases of $(X,\alpha)$ and $Z$, an answer is elementary and completely satisfactory. Recall that $\dim X = N$, $|\alpha| = r$, and $|Z| = N-k$. We may clearly assume for degree reasons that $0 \leq k \leq N$ and $0 \leq r \leq N-k$. Let us consider some edge cases.

\begin{exam}
	If $\alpha = 0$, then every submanifold is exact. Correspondingly, notice that all of the obstructions of Theorem \ref{thm:main} clearly vanish.
\end{exam}

\begin{exam}
	The class $Z = 0$ is always represented by an embedded exact submanifold of any dimension, e.g. by any empty submanifold, or less trivially by any small trivial sphere. So we may focus our attention to the case $Z \neq 0$. Notice again that the obstructions of Theorem \ref{thm:main} clearly vanish.
\end{exam}

\begin{exam} \label{exam:top_codim}
	Consider the case $r=N-k$. Then exactness of a connected submanifold $Y^{N-k}$ is dependent only upon its homology class. That is, any $Y$ with $Y = [Z]$ is exact if and only if
	$$\langle \alpha, [Y] \rangle = 0,$$
	where the pairing is the standard one between homology and cohomology classes of the same degree. Comparing to Theorem \ref{thm:main}, we recover the obstruction
	$$\alpha \wedge PD(Z) = PD(\langle \alpha, [Y]\rangle).$$
	However, in the case where $k$ or $r$ is odd, it appears as though $\infty$-jet deformability is irrelevant to this geometric reasoning. See Proposition \ref{prop:1-to-infty} which exploits this point.
\end{exam}

Once we move away from these examples, the question becomes more subtle, as the exactness of a hypersurface is no longer determined by its homology class alone. The following is a silly example.

\begin{exam}\label{exam:r=0}
	Suppose $\alpha \in H^0(X;\R)$ is homogeneous of degree $0$. We may identify $\alpha$ with a locally constant function on $X$, from which $X$ decomposes as
	$$X = \alpha^{-1}(0) \sqcup (X \setminus \alpha^{-1}(0))$$
	where each of the two terms is a union of components of $X$. A submanifold is exact if and only if it is contained in $\alpha^{-1}(0)$. Note that even though this condition of exactness is relatively straightforward, the exactness of a submanifold is not determined by its homology class.
\end{exam}

The situation from $r=0$ generalizes to Proposition \ref{prop:non-homological} below, which tells us that exactness of a given submanifold $Y \subset X$ depends in general upon its specific embedding, and not just on its homology class $[Y] \in H_{N-k}(X)$. It is for this reason that an answer to Question \ref{quest:main} is subtle.

\begin{prop} \label{prop:non-homological}
	For any $N \geq 3$, $1 \leq k \leq N-2$, and $1 \leq r \leq N-k-1$, there exists a pair $(X,\alpha)$ consisting of a closed connected oriented $N$-manifold $X$ and a cohomology class $\alpha \in H^r(X;\R)$, together with a pair of $(N-k)$-dimensional embedded submanifolds $Y_1,Y_2 \subset X$, such that $[Y_1] = [Y_2] \in H_{N-k}(X)$ with $Y_1$ exact but $Y_2$ not exact.
\end{prop}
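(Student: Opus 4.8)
The plan is to build $(X,\alpha)$ by a fiber-product / mapping-space construction, starting from the universal example in top codimension (Example \ref{exam:top_codim}). The key observation is that in the degree range $r = N-k$, exactness of a connected submanifold $Y^{N-k}$ genuinely depends only on the number $\langle \alpha, [Y]\rangle$, so to get two homologous submanifolds with different exactness behavior I want a space where a single homology class $Z$ has representatives whose restriction of $\alpha$ behaves differently. The cleanest way to do this is to work on a product $X = M^{r} \times P^{N-r}$, choosing $\alpha = \beta \times 1$ for a nonzero class $\beta \in H^{r}(M;\R)$, and to look for submanifolds that are graphs of maps $P \to M$ over various subvarieties; but since we want codimension exactly $k$ with $k \leq N-2$ and $r \leq N-k-1$, I would instead take $X = S^{r+1} \times S^{N-r-1}$ (both factors of dimension $\geq 2$ given the stated inequalities $r\geq 1$, $N-r-1 \geq k \geq 1$), let $\alpha$ be the pullback of the generator of $H^{r}(S^{r+1}\times S^{N-r-1};\R)$ — wait, $S^{r+1}$ has no $H^r$ — so more carefully I take $X = S^r \times S^{N-r}$ when $r \geq 1$ and $N - r \geq 1$, with $\alpha$ the pullback of the volume form on the first factor, normalized so $\langle \alpha, [S^r \times \mathrm{pt}]\rangle = 1$.

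First I would identify a homology class $Z \in H_{N-k}(X)$ that admits a connected representative on which $\alpha$ restricts to zero, and another connected representative on which it does not. Concretely, take $Z = [\mathrm{pt} \times S^{N-r-?}]$... — the bookkeeping here is the main nuisance, because I need $Z$ of dimension $N-k$, $\alpha$ of degree $r$, and I need $\langle\alpha,-\rangle$ to be able to both vanish and not vanish on representatives of $Z$, which forces $Z$ to have a nonzero pairing against something of complementary degree. The right move is: pick $Z$ with $[Z]\cdot \mathrm{PD}(\alpha) \neq 0$ as a homology intersection — then a representative $Y_2$ in "general position" meeting $\mathrm{PD}(\alpha)$ transversally cannot be exact because $\alpha|_{Y_2}$ pairs nontrivially with some cycle in $Y_2$ — and then to produce $Y_1$, I take a different (non-transverse, or "doubled") representative of the same $Z$ that can be pushed entirely into $\{ \mathrm{pt}\} \times S^{N-r}$ or more generally into a subset on which $\alpha$ is exact as a form. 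The existence of such a $Z$ with nonzero self-paired intersection number is exactly where the hypotheses $N\geq 3$, $1\leq k\leq N-2$, $1\leq r\leq N-k-1$ get used, to guarantee enough room in the product $S^r \times S^{N-r}$ (or a connected sum thereof) for both phenomena.

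Concretely, I expect the construction to run as follows: (i) set $X = S^r \times S^{N-r}$ with $\alpha = \pi_1^*\mathrm{vol}_{S^r}$; (ii) let $Y_1 = \{p\} \times \Sigma$ for a connected embedded $(N-k)$-dimensional submanifold... but $\dim(\{p\}\times S^{N-r}) = N-r < N-k$ iff $r > k$, which may fail — so when $r \leq k$ I instead take $Y_1 = (\text{small } (r-k+?)\text{-sphere in } S^r)\times S^{N-r}$ sitting in a coordinate ball where $\alpha$ (being a top form on $S^r$, hence exact on any proper open subset) restricts to an exact form, hence $\alpha|_{Y_1} = 0$ in de Rham cohomology; (iii) let $Y_2$ be a homologous submanifold obtained by tubing $Y_1$ together with a small $(N-k)$-sphere that links $\mathrm{PD}(\alpha)$, i.e. that wraps once around the $S^r$ factor — such a $Y_2$ has $\alpha|_{Y_2} \neq 0$ because it contains a cycle pairing nontrivially with $\alpha$; (iv) check $[Y_1]=[Y_2]$, which holds because the tubing is along a homologically trivial sphere; (v) connectedness of $X$ and of $Y_1, Y_2$ is arranged by taking connected sums / tubes. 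The main obstacle will be step (ii)–(iii): making precise, for every admissible triple $(N,k,r)$, a \emph{connected} submanifold of the correct dimension representing a homology class that is simultaneously "squeezable into a chart" (forcing exactness) and "linkable with $\mathrm{PD}(\alpha)$" (allowing non-exactness of another representative); I anticipate handling this with a uniform tubing/connect-sum argument rather than case analysis, possibly replacing the product of spheres with $S^r \times S^{N-r}$ connect-summed with $S^{r'} \times S^{N-r'}$ to guarantee the needed extra generator when the naive product is too rigid.
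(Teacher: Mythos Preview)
Your step~(iii) does not work as written. Tubing $Y_1$ with an embedded $(N-k)$-sphere (i.e.\ taking an ambient connected sum) does not change the diffeomorphism type of $Y_1$: if $Y_1\cong S^{N-k}$ then $Y_2\cong S^{N-k}\# S^{N-k}\cong S^{N-k}$ as well, so $H^r(Y_2;\R)=0$ and $\alpha|_{Y_2}$ is forced to vanish regardless of how the sphere is embedded. The point is that for $\alpha|_{Y_2}\neq 0$ you must produce a nonzero class in $H^r(Y_2;\R)$, which requires changing the \emph{topology} of the submanifold, not merely its embedding. (Your phrase ``an $(N-k)$-sphere that links $\mathrm{PD}(\alpha)$'' is also dimensionally off: the linking partner of the $(N-r)$-cycle $\mathrm{PD}(\alpha)$ is an $(r-1)$-sphere, not an $(N-k)$-sphere.)

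There is a clean repair inside your chosen $X=S^r\times S^{N-r}$ that avoids all the bookkeeping you were worried about. Take $Y_1$ to be a small standard $S^{N-k}$ in a chart, and take $Y_2=S^r\times S^{N-k-r}$ where $S^{N-k-r}\subset S^{N-r}$ is a round subsphere. The hypotheses give $1\le N-k-r\le N-r-1$, so this subsphere bounds a disk in $S^{N-r}$ and hence $[Y_2]=0=[Y_1]$. Then $H^r(Y_1;\R)=0$ makes $Y_1$ exact, while $\alpha|_{Y_2}$ is the generator of $H^r(S^r\times S^{N-k-r};\R)$, so $Y_2$ is not exact. No tubing, no case analysis.

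For comparison, the paper uses the very same two model submanifolds $S^{N-k}$ and $S^r\times S^{N-k-r}$ but assembles the ambient space differently: it removes an open ball from $S^r\times D^{N-k-r+1}$ to get an explicit cobordism $C$ from $W_1=S^{N-k}$ to $W_2=S^r\times S^{N-k-r}$, doubles $C$, and then crosses with a closed $(k-1)$-manifold $M$ to reach dimension $N$. The payoff of that route is that in the paper's $X$ the common class $[Y_1]=[Y_2]$ is \emph{nonzero} (a transverse arc through $C$ doubles to a circle meeting $W_1$ once), whereas the repair above---and indeed any construction in $S^r\times S^{N-r}$ when $k\neq r$, since then $H_{N-k}(S^r\times S^{N-r})=0$---only realizes the zero class. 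Either suffices for the proposition as stated.
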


\begin{proof}
	Consider the manifold with boundary $C = (S^r \times D^{N-k-r+1}) \setminus D^{N-k+1}$, thought of as a cobordism from $W_1 = S^{N-k}$ to $W_2 = S^{r} \times S^{N-k-r}$. We may take the double of $C$, i.e.
	$$\D C := C \cup_{W_1 \sqcup W_2} C.$$
	Consider on $C$ the restriction of a generating cohomology class $\beta_{\Z} \in H^r(S^r \times \D^{N-k-r+1}) \cong \Z$, and let $\beta$ be its real image. By Mayer--Vietoris, we see that we obtain a class $\wt{\beta} \in H^r(\D C; \R)$ which restricts to $\beta$ on each copy of $C$. Finally, if $M^{k-1}$ is a closed oriented manifold, take $X = M \times \D C$, and let $\alpha \in H^r(M \times \D C; \R)$ be the pull-back of $\wt{\beta}$ under projection. Letting $Y_j = M \times W_j$, the conclusion now holds by the K\"unneth theorem. That is, since $\alpha$ is pulled back, we see that we may identify $\alpha|_{Y_j}$ with
	$$\beta|_{W_j} \in H^r(W_j;\R) \cong H^0(M;\R) \otimes H^r(W_j;\R)  \stackrel{\text{K\"unneth}}{\cong} H^r(Y_j;\R).$$
	Clearly $\beta|_{W_1} = 0$ whereas $\beta|_{W_2} \neq 0$, and hence the result follows.
\end{proof}

\begin{rmk}
	In the proof of Proposition \ref{prop:non-homological}, it is essential that $Y_1$ and $Y_2$ are non-homotopic even though they are homologous, since if $Y_1$ and $Y_2$ are homotopic, then $Y_1$ is exact if and only if $Y_2$ is exact.
\end{rmk}

\section{$L$-jet deformability} \label{sec:deform}

\subsection{Motivation and formal deformations}

The geometric question (Question \ref{quest:main}) and partial obstructive answer (Theorem \ref{thm:main}) at the heart of this paper are very much related to the algebraic study of deformations of closed elements in the de Rham complex as we simultaneously twist the differential.\footnote{Indeed, the initial motivation for this work arose from a desire to understand the study of deformations of locally conformal symplectic structures (LCS) upon deformation of the Lee class, as in work of Apostolov and Dloussky \cite{AD}. This work began from the realization that exact hypersurfaces in homology class $Z$ yield deformations of the Lee class in the direction of $PD(Z)$.} We discuss the basic algebraic set-up in this section; a more general discussion appears in Section \ref{ssec:general}.

\begin{defn}
	Given a closed $1$-form $\eta \in \scr{Z}^1(X)$, one may form the \textbf{Lichnerowicz--de Rham differential} $$d_{\eta} \colon \Omega^*(X) \rightarrow \Omega^{*+1}(X)$$ given by $d_{\eta}\omega := d\omega - \eta \wedge \omega$.
\end{defn}

The Lichnerowicz--de Rham differential satisfies $d_{\eta}^2 = 0$ (precisely because $d\eta = 0$), and hence gives a new `twisted' cochain complex structure on $\Omega^*(X)$. The basic algebraic question we study is as follows:
\begin{quest}
	Suppose $\omega_0 \in \scr{Z}^*(X)$ is a closed differential form (with respect to $d$). Does there exist a family of closed differential forms $\omega(t) \in \scr{Z}^*(X)$ with $\omega(0) = \omega_0$ such that $d_{t\eta}\omega_t = 0$?
\end{quest}
At the infinitesimal level, one may find obstructions by taking the Taylor series around $t=0$. That is to say that we are looking for
$$\omega(t) = \sum_{j=0}^{\infty} \omega_j t^j$$
with constant coefficient $\omega_0$ the same as specified in our question such that
$$d_{t\eta} \omega(t) = 0.$$
Separating out power by power, we arrive at the equations $d\omega_{j+1} = \eta \wedge \omega_j$. In other words, we arrive at our notion of $\infty$-jet deformability from the introduction. Algebraically, what we have done is to form the \textbf{formal Lichnerowicz--de Rham complex}
$$\scr{D}^{\infty}_{\eta}(X) := (\Omega^*(X) \otimes_{\R} \R\llbracket t\rrbracket, d_{t\eta})$$
where
$$d_{t\eta}(\omega \otimes p(t)) := d\omega \otimes p(t) - (\eta \wedge \omega) \otimes tp(t).$$

Suppose now that $\eta \in \scr{Z}^k(X)$ is a closed differential form of homogeneous degree $k$. We may still try to form the same formal complex. If we work too na\"ively, we note that if $k$ is even, then
$$(d_{t\eta})^2\omega = t^2\eta \wedge \eta \wedge \omega$$
is not necessarily $0$ even though $d\eta = 0$. This is fixed in this parity by setting $t^2=0$. In fact, this arises rather naturally by considering grading. Since we desire a cochain complex, we need that $d_{t\eta}$ has degree $1$, and hence take $|t|=1-k$. The Koszul sign rule for graded commutativity in turn implies that $t^2=0$ if $k$ is odd.

Hence, we arrive at the following dichotomy for the formal Lichnerowicz--de Rham complex:
$$\scr{D}^{\infty}_{\eta}(X) :=
\begin{cases}
	(\Omega^*(X) \otimes_{\R} \R\llbracket t\rrbracket, d_{t\eta}),& k~\mathrm{odd}\\
	(\Omega^*(X) \otimes_{\R} \R[t]/(t^2), d_{t\eta}),& k~\mathrm{even}
\end{cases}$$
We see that for $k$ even, an element $\omega(t)$ may be written just as $\omega_0 + t \omega_1$, and it is closed precisely if $d\omega_1 = \eta \wedge \omega_0$, with no higher $\omega_j$ required. It is for this reason that the statement of Theorem \ref{thm:main} comes with parity conditions. We may therefore freely restrict to $k$ odd for the rest of the section.\footnote{Everything we write is legitimate for all $k$ so long as one remembers $t^2=0$ in the case of $k$ even.}

\subsection{$L$-jet approximations}

We may take $L$-jet approximations to our formal Lichnerowicz--de Rham complex by modding out by the ideal $(t^{L+1})$. Explicitly, if we fix a closed $k$-form $\eta \in \scr{Z}^k(X)$ and an integer $L \geq 0$, we form the $L$-jet approximation
\begin{eqnarray*}
\scr{D}^L_{\eta}(X) &:=& \scr{D}^{\infty}_{\eta}(X) \otimes_{\R} \quot{\R\llbracket t\rrbracket}{(t^{L+1})} \\
	&=& \left(\Omega^*(X) \otimes \quot{\R\llbracket t\rrbracket}{(t^{L+1})},d_{t\eta}\right)
\end{eqnarray*}
where the formal variable $t$ is again given degree $1-k$, and where $d^L_{t\eta}$ indicates applying $d_{t\eta}$ but modding out by $t^{L+1}$. Indeed, it is clear that $d^L_{t\eta}$ descends as such since it preserves the ideal $(t^{L+1})$. The fact that $d_{t\eta}$ was a differential descends also to $d^L_{t\eta}$, i.e. $(d^L_{t\eta})^2 = 0$.

\begin{exam} \label{exam:no_deform}
	In the case $L=0$, we have simply
	$$\scr{D}^0_{\eta}(X) = (\Omega^*(X),d),$$
	i.e. there is no formal deformation of the de Rham complex at all.
\end{exam}

We note that it is natural to include the case $L=\infty$, in which case we recover the complex $\scr{D}^{\infty}_{\eta}(X)$. We shall henceforth allow $L = \infty$ where possible.

The complexes $\scr{D}^L_{\eta}(X)$ come with a number of maps between them. We will define them first as graded linear maps, though they all actually intertwine the corresponding differentials, and hence yield maps of cochain complexes, which we state as part of Proposition \ref{prop:cochain_map_properties}. The three types of maps we consider are as follows:

\noindent \textbf{1. Truncation maps:} For each $0 \leq L_1 \leq L_2 \leq \infty$, we obtain a truncation map
$$\tau^{L_2,L_1}_{\eta} \colon \scr{D}_{\eta}^{L_2}(X) \rightarrow \scr{D}_{\eta}^{L_1}(X)$$
induced by the quotient maps $\R\llbracket t \rrbracket/(t^{L_2+1}) \rightarrow \R\llbracket t \rrbracket/(t^{L_1+1})$.\\

\noindent \textbf{2. Gauge change isomorphisms:} If $\eta' = \eta+dg$ is another cohomologous closed $k$-form, with $g$ a fixed $(k-1)$-form, then we obtain a gauge change map
$$\Phi^L_{\eta;g} \colon \scr{D}^L_{\eta}(X) \rightarrow \scr{D}^L_{\eta'}(X)$$
given by (formal) multiplication by
$$e^{tg} := \sum_{j=0}^{L}\frac{g^j}{j!}t^j \in C^{\infty}(X) \otimes \R\llbracket t \rrbracket/(t^{L+1}).$$
In fact, these are isomorphisms with inverse given by multiplying by $e^{-tg}$, i.e. the map $\Phi^L_{\eta';-g}$.

\begin{rmk}
	Even if $\eta = \eta'$, we may take $g$ to be a nonzero closed $(k-1)$-form and the resulting automorphism $\Phi^L_{\eta;g}$ will not be the identity.
\end{rmk}

\noindent\textbf{3. Scale change isomorphisms:} If $c \in \R^*$, then we obtain a scale change map
$$s^L_{\eta;c} \colon \scr{D}^L_{\eta}(X) \rightarrow \scr{D}^L_{c\eta}(X)$$
given by sending $t \mapsto t/c$, i.e.
$$s^L_{\eta;c}\left(\sum_{j=0}^{L} \alpha_j t^j\right) = \sum_{j=0}^{L} \frac{\alpha_j}{c^j}t^j.$$
This is in fact an isomorphism because it has inverse $s^L_{c\eta;1/c}$.\\

\begin{prop} \label{prop:cochain_map_properties}
	The truncation maps, gauge change isomorphisms, and scale change isomorphisms are all (degree zero) maps of cochain complexes which satisfy the following properties:
	\begin{itemize}
		\item \textbf{Cocycle conditions of truncation maps:} $$\tau^{L,L}_{\eta} = \mathrm{id}, \qquad \qquad \tau^{L_2,L_1}_{\eta} \circ \tau^{L_3,L_2}_{\eta} = \tau^{L_3,L_1}_{\eta}.$$
		\item \textbf{Cocycle conditions for gauge change:}
		$$\Phi^L_{\eta;0} = \mathrm{id}, \qquad \qquad \Phi^L_{\eta;g_1+g_2} = \Phi^L_{\eta+dg_1;g_2} \circ \Phi^L_{\eta;g_1}$$
		\item \textbf{Cocycle conditions for scale change:}
		$$s^L_{\eta;1} = \mathrm{id}, \qquad \qquad s^L_{c_1\eta;c_2} \circ s^L_{\eta;c_1} = s^L_{\eta;c_1c_2}.$$
		\item \textbf{Commutativity of truncation and gauge change:}
		$$\tau_{\eta+dg}^{L_2,L_1} \circ \Phi_{\eta;g}^{L_2} = \Phi_{\eta;g}^{L_1} \circ \tau_{\eta}^{L_2,L_1}$$
		\item \textbf{Commutativity of truncation and scale change:}
		$$\tau_{c\eta}^{L_2,L_1} \circ s^{L_2}_{\eta;c} = s^{L_1}_{\eta;c} \circ \tau_{\eta}^{L_2,L_1}$$
		\item \textbf{Commutativity of gauge change and scale change:}
		$$\Phi_{c\eta;c g}^L \circ s^{L}_{\eta;c} = s^{L}_{\eta+dg;c} \circ \Phi^L_{\eta;g}$$
	\end{itemize}
\end{prop}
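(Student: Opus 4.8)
The plan is to verify each of the stated properties by direct computation, since every map in sight is given by an explicit formula on the chain level. The main conceptual point to establish first is that the three families of maps are genuinely maps of cochain complexes, i.e. that they intertwine the twisted differentials $d^L_{t\eta}$; once this is known, all of the cocycle and commutativity identities follow from the corresponding identities of the \emph{underlying} graded-linear maps, because the forgetful functor to graded vector spaces is faithful. So I would organize the proof in two stages: (1) check compatibility with differentials, and (2) check the six bulleted identities as identities of graded-linear maps.

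For stage (1): the truncation maps are obviously chain maps, since $d_{t\eta}$ on $\scr{D}^{L_2}_\eta(X)$ descends to $d_{t\eta}$ on $\scr{D}^{L_1}_\eta(X)$ by construction — modding out by $(t^{L_1+1})$ factors through modding out by $(t^{L_2+1})$. For the scale change maps $s^L_{\eta;c}$, substituting $t \mapsto t/c$ sends $d_{t\eta}\omega = d\omega - t\,\eta\wedge\omega$ to $d\omega - (t/c)\,\eta\wedge\omega = d\omega - t\,(c^{-1}\eta)\wedge\omega \cdot$, wait — more carefully, one checks on a monomial $\alpha_j t^j$ that $s^L_{\eta;c}\circ d_{t\eta}$ and $d_{t(c\eta)}\circ s^L_{\eta;c}$ agree, which reduces to the scalar identity $c^{-(j+1)}\cdot c = c^{-j}$ on the $\eta\wedge\alpha_j$ term. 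The gauge change maps $\Phi^L_{\eta;g}$ are the only nontrivial case: here one uses the graded Leibniz rule together with the fact that $d(e^{tg}) = t\,(dg)\wedge e^{tg}$ (formally, in the quotient ring, and using $|t|=1-k$, $|g|=k-1$ so that $tg$ has even total degree and the naive exponential formalism is valid); then $d_{t\eta'}(e^{tg}\omega) = d(e^{tg}\omega) - t\eta'\wedge e^{tg}\omega = t(dg)\wedge e^{tg}\omega + e^{tg}d\omega - t(\eta+dg)\wedge e^{tg}\omega = e^{tg}(d\omega - t\eta\wedge\omega) = e^{tg}\, d_{t\eta}\omega$, using commutativity of the function $e^{tg}$ with forms. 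One must be a little careful with signs coming from moving $t$ (odd when $k$ even — but there $t^2=0$; odd-looking but even when $k$ odd) past odd-degree forms, but the degree bookkeeping $|t|=1-k$ makes $tg$ of even total degree, so $e^{tg}$ is central and all signs work out.

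For stage (2), each identity is a routine check on monomials $\alpha_j t^j$. The truncation cocycle conditions are immediate from transitivity of the quotient maps $\R\llbracket t\rrbracket/(t^{L_3+1}) \to \R\llbracket t\rrbracket/(t^{L_2+1}) \to \R\llbracket t\rrbracket/(t^{L_1+1})$. The gauge cocycle condition $\Phi^L_{\eta;g_1+g_2} = \Phi^L_{\eta+dg_1;g_2}\circ\Phi^L_{\eta;g_1}$ is exactly the functional equation $e^{t(g_1+g_2)} = e^{tg_2}e^{tg_1}$, valid since $g_1,g_2$ commute as functions; note the $\eta$-subscript only tracks which complex we land in and plays no role in the formula, which is why the composite makes sense. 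The scale cocycle condition is the identity $s\mapsto s/c_1 \mapsto s/(c_1c_2)$ on the variable. The three commutativity squares are likewise formal: truncation commutes with multiplication by $e^{tg}$ and with rescaling $t$ because these operations are defined compatibly across all $L$; and $\Phi^L_{c\eta;cg}\circ s^L_{\eta;c}$ versus $s^L_{\eta+dg;c}\circ\Phi^L_{\eta;g}$ both send $\omega$ to $e^{t g}\omega$ after the substitution $t\mapsto t/c$ — the point being that $e^{(t/c)(cg)} = e^{tg}$, so the factor of $c$ in the gauge parameter exactly cancels the $c$ in the rescaling. I expect no real obstacle here; the only place demanding genuine care is the sign analysis in stage (1) for $\Phi$ when $k$ is even (where one leans on $t^2 = 0$ to kill the would-be obstruction $t^2\,\eta\wedge\eta$ and its gauge analogues), and keeping straight that the subscripts on $\tau,\Phi,s$ record codomains and must be matched correctly when composing.
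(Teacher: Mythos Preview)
Your approach is precisely the paper's: direct verification on the chain level, with the gauge-change chain-map identity as the only nontrivial computation --- your derivation of $d_{t\eta'}(e^{tg}\omega) = e^{tg}\,d_{t\eta}\omega$ is line-for-line the paper's own argument, and the paper explicitly leaves all remaining properties as exercises.

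There is, however, a genuine wrinkle in your scale-change verification, and your first instinct was right: substituting $t\mapsto t/c$ sends $d_{t\eta}$ to $d_{t(\eta/c)}$, not $d_{t(c\eta)}$. Your monomial ``fix'' does not rescue this. On the $\eta\wedge\alpha_j\,t^{j+1}$ term one gets coefficient $c^{-(j+1)}$ from $s^L_{\eta;c}\circ d_{t\eta}$ but $c\cdot c^{-j}=c^{1-j}$ from $d_{t(c\eta)}\circ s^L_{\eta;c}$, and these differ by $c^2$; your written identity $c^{-(j+1)}\cdot c=c^{-j}$ is true but is not the comparison at hand. The same $c^2$ discrepancy surfaces in your gauge/scale commutativity check: $\Phi^L_{c\eta;cg}\circ s^L_{\eta;c}$ yields $e^{t(cg)}\omega(t/c)$ while $s^L_{\eta+dg;c}\circ\Phi^L_{\eta;g}$ yields $e^{(t/c)g}\omega(t/c)$, and your formula $e^{(t/c)(cg)}=e^{tg}$ double-applies the rescaling to the $\Phi$-factor. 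The underlying issue is a slip in the paper's stated formula for $s^L_{\eta;c}$: either the substitution should read $t\mapsto ct$ (so $\alpha_j t^j\mapsto c^{j}\alpha_j t^j$) with target $\scr{D}^L_{c\eta}$, or else the target should be $\scr{D}^L_{\eta/c}$. Either correction makes every listed property hold by exactly the routine checks you outline, and the downstream use in Proposition~\ref{prop:scale_change} is unaffected since $s^0_{\eta;c}$ is still the identity.
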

\begin{proof}[Proof, mostly left as an exercise for the reader]
	Let us prove that gauge change is a map of cochain complexes; in other words that
	$$d^L_{t(\eta+dg)} \circ \Phi^L_{\eta;g} = \Phi^L_{\eta;g} \circ d^L_{t\eta}.$$
	We indeed check this is the case:
	\begin{eqnarray*}
		d^L_{t(\eta+dg)}\Phi^L_{\eta;g}\alpha &=& d^L_{t(\eta+dg)}(e^{tg}\alpha) \\
		&=& d(e^{tg}\alpha) - t(\eta+dg)e^{tg}\alpha (\mathrm{mod}~t^{L+1}) \\
		&=& e^{tg} \cdot tdg \wedge \alpha + e^{tg}d\alpha - t(\eta+dg)e^{tg}\alpha (\mathrm{mod}~t^{L+1}) \\
		&=& e^{tg}(d\alpha - t\eta \wedge \alpha) (\mathrm{mod}~t^{L+1}) \\
		&=& \Phi^L_{\eta;g}d^L_{t\eta}\alpha
	\end{eqnarray*}
	The other properties are similarly easily verified.	
\end{proof}

\begin{rmk}
	In order to define the gauge change and scale change isomorphisms, we only needed to define them in the case of $L = \infty$. Indeed, these maps on $\scr{D}^{\infty}_{\eta}(X)$ preserve the ideals $(t^{L+1})$, and so they descend uniquely to each $\scr{D}^L_{\eta}(X)$ in such a way to commute with the truncation maps.
\end{rmk}

Maps of cochain complexes always preserve exact elements. In fact, for the truncation maps, we record the following useful lemma.

\begin{lem} \label{lem:surjective_exact}
	Let $\scr{B}^*(\scr{D}_{\eta}^{L}(X))$ represent the exact elements. Then for $L_1 \leq L_2$, the truncations
	$$\tau_{\eta}^{L_2,L_1} \colon \scr{B}^*(\scr{D}_{\eta}^{L_2}(X)) \rightarrow \scr{B}^*(\scr{D}_{\eta}^{L_1}(X))$$
	are surjective.
\end{lem}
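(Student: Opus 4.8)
The plan is to reduce the statement to two ingredients that are already in hand: the truncation map is tautologically surjective at the level of the underlying (graded) $\R$-modules, and, by Proposition \ref{prop:cochain_map_properties}, it is a map of cochain complexes, i.e. it intertwines the twisted differentials. The trick is simply to lift the \emph{primitive}, not the coboundary.

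First I would record the module-level surjectivity. Forgetting differentials, $\tau^{L_2,L_1}_{\eta}\colon \scr{D}^{L_2}_{\eta}(X)\to\scr{D}^{L_1}_{\eta}(X)$ is obtained by applying $\Omega^*(X)\otimes_{\R}(-)$ to the surjection $\R\llbracket t\rrbracket/(t^{L_2+1})\twoheadrightarrow\R\llbracket t\rrbracket/(t^{L_1+1})$, and tensoring preserves surjections. Concretely, a general element $\sum_{j=0}^{L_1}\beta_j t^j$ of $\scr{D}^{L_1}_{\eta}(X)$ is lifted by viewing the same finite sum as an element of $\scr{D}^{L_2}_{\eta}(X)$; this works verbatim when $L_2 = \infty$.

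Next, given an exact element $\sigma\in\scr{B}^*(\scr{D}^{L_1}_{\eta}(X))$, by definition there is $\beta\in\scr{D}^{L_1}_{\eta}(X)$ with $d^{L_1}_{t\eta}\beta=\sigma$. Choose a lift $\wt{\beta}\in\scr{D}^{L_2}_{\eta}(X)$ of $\beta$ along the surjection of the previous paragraph. Then $d^{L_2}_{t\eta}\wt{\beta}\in\scr{B}^*(\scr{D}^{L_2}_{\eta}(X))$, and since $\tau^{L_2,L_1}_{\eta}$ is a cochain map,
$$\tau^{L_2,L_1}_{\eta}\bigl(d^{L_2}_{t\eta}\wt{\beta}\bigr) = d^{L_1}_{t\eta}\bigl(\tau^{L_2,L_1}_{\eta}\wt{\beta}\bigr) = d^{L_1}_{t\eta}\beta = \sigma,$$
which is exactly the desired surjectivity.

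I do not expect any real obstacle here: the entire content is the combination of the two ingredients above, both already available. The only point worth stressing in the writeup is that one must lift the primitive $\beta$ rather than $\sigma$ itself — lifting $\sigma$ along the (surjective) truncation would only produce some preimage, with no reason for it to be a coboundary in $\scr{D}^{L_2}_{\eta}(X)$.
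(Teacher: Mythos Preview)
Your argument is correct and matches the paper's own proof essentially verbatim: lift the primitive $\beta$ along the cochain-level surjection and use that $\tau^{L_2,L_1}_{\eta}$ is a cochain map to conclude $\tau^{L_2,L_1}_{\eta}(d^{L_2}_{t\eta}\wt{\beta}) = \sigma$. The only additions over the paper are your explicit remark on module-level surjectivity and the pedagogical note about lifting $\beta$ rather than $\sigma$, neither of which changes the approach.
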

\begin{proof}
	Suppose $\alpha = d^{L_1}_{t\eta}\beta \in \scr{B}^*(\scr{D}_{\eta}^{L_1}(X))$. We have $\tau_{\eta}^{L_2,L_1}$ is surjective at the cochain level, so we may lift $\beta$ to some $\wt{\beta} \in \scr{D}_{\eta}^{L_2}(X)$ with
	$$\tau_{\eta}^{L_2,L_1}(\wt{\beta}) = \beta.$$
	But then since $\tau_{\eta}^{L_2,L_1}$ is a map of cochain complexes, we find
	$$\tau_{\eta}^{L_2,L_1}(d_{t\eta}^{L_2}\wt{\beta}) = d_{t\eta}^{L_1}\left(\tau_{\eta}^{L_2,L_1}(\wt{\beta})\right) = d_{t\eta}^{L_1}\beta = \alpha,$$
	and so we obtain the result.
\end{proof}

Because our maps are all maps of cochain complexes, they descend to cohomology. We will use the same notation for the maps acting at the cochain level versus at cohomology, though the meaning will be clear from context.

In the case $L=0$, so that $\scr{D}^0_{\eta}(X) = (\Omega^*(X),d)$ as in Example \ref{exam:no_deform}, the gauge change isomorphism $\Phi^0_{\eta;g}$, considered as an automorphism of $(\Omega^*(X),d)$, is just the identity, regardless of $\eta$ and $g$. Combining this with Proposition \ref{prop:cochain_map_properties} we see the following diagram commutes for each $L$, $\eta$, and $g$:
\[\xymatrix{H^*(\scr{D}_{\eta}^L(X)) \ar^-{\tau^{L,0}_{\eta}}[r] \ar[d]_{\Phi^L_{\eta;g}} & H^*(\scr{D}_{\eta}^0(X)) \ar[d]_{\Phi^0_{\eta;g}} \ar@{=}[r] & H^*(X;\R) \ar@{=}[d] \\
H^*(\scr{D}_{\eta+dg}^L(X)) \ar^-{\tau^{L,0}_{\eta+dg}}[r] & H^*(\scr{D}_{\eta+dg}^0(X)) \ar@{=}[r] & H^*(X;\R)}\]
It follows, since the left vertical arrow is an isomorphism, that the images of $\tau_{\eta}^{L,0}$ and $\tau_{\eta+dg}^{L,0}$, as subspaces of $H^*(X;\R)$, are identical. We denote this subspace by
$$V^L_{[\eta]} := \mathrm{Im}(\tau_{\eta}^{L,0} \colon H^*(\scr{D}_{\eta}^L(X)) \rightarrow H^*(X;\R)),$$
where we now have implicit in the notation the fact that this subspace depends only upon the cohomology class $[\eta] \in H^k(X;\R)$, and not on the specific representative closed $1$-form $\eta$.

\begin{defn} \label{def:L-jet_deform}
	Suppose $\alpha \in H^*(X;\R)$ and $\mu \in H^k(X;\R)$. Then $\alpha$ is said to be \textbf{$L$-jet deformable along $\mu$} if $\alpha \in V^L_{\mu}$.
\end{defn}

Let us unwind this definition to make sure it agrees with the introduction.

\begin{prop} \label{prop:L-jet_equiv}
	Suppose $\omega \in \Omega^r(X)$ is a closed differential $r$-form and $\eta \in \scr{Z}^k(X)$ is closed differential $k$-form. Then $[\omega]$ is $L$-jet deformable along $[\eta]$ if and only if there exists a sequence of differential form $\omega_1,\ldots,\omega_L$ such that, setting $\omega_0 = \omega$, we have
	$$d\omega_{\ell+1} = \eta \wedge \omega_\ell~\text{for all}~ 0 \leq \ell < L$$
\end{prop}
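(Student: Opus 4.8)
The plan is to prove the two directions of the equivalence by connecting the abstract definition of $V^L_{[\eta]}$ (the image of the truncation map $\tau^{L,0}_\eta$ on cohomology) with the concrete system of equations $d\omega_{\ell+1} = \eta \wedge \omega_\ell$. The key observation, which I would isolate first, is an explicit description of cocycles in $\scr{D}^L_\eta(X)$: an element $\Omega(t) = \sum_{j=0}^L \omega_j t^j \in \Omega^*(X) \otimes \R\llbracket t\rrbracket/(t^{L+1})$ satisfies $d^L_{t\eta}\Omega(t) = 0$ if and only if, expanding $d\Omega(t) - t\eta\wedge\Omega(t) = \sum_j (d\omega_j - \eta\wedge\omega_{j-1})t^j$ modulo $t^{L+1}$ and collecting powers of $t$, we have $d\omega_0 = 0$ and $d\omega_j = \eta \wedge \omega_{j-1}$ for all $1 \leq j \leq L$. (Here $\omega_{-1} := 0$.) This is a purely formal manipulation that unwinds the definition of $d^L_{t\eta}$ given in the excerpt.

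For the ``if'' direction, suppose we are given $\omega_1, \ldots, \omega_L$ with $d\omega_{\ell+1} = \eta \wedge \omega_\ell$ for $0 \leq \ell < L$ and $\omega_0 = \omega$. Then by the observation above, $\Omega(t) := \sum_{j=0}^L \omega_j t^j$ is a $d^L_{t\eta}$-cocycle in $\scr{D}^L_\eta(X)$, hence defines a class $[\Omega(t)] \in H^*(\scr{D}^L_\eta(X))$, and $\tau^{L,0}_\eta([\Omega(t)]) = [\omega_0] = [\omega] \in H^*(X;\R)$ since truncation to $L=0$ simply reads off the constant term. Therefore $[\omega] \in \mathrm{Im}(\tau^{L,0}_\eta) = V^L_{[\eta]}$, i.e. $[\omega]$ is $L$-jet deformable along $[\eta]$.

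For the ``only if'' direction, suppose $[\omega] \in V^L_{[\eta]}$, so there is a cocycle $\Omega(t) = \sum_{j=0}^L \tilde\omega_j t^j$ in $\scr{D}^L_\eta(X)$ with $\tau^{L,0}_\eta([\Omega(t)]) = [\omega]$, meaning $[\tilde\omega_0] = [\omega]$ in de Rham cohomology, i.e. $\tilde\omega_0 = \omega + d\gamma$ for some $\gamma \in \Omega^{r-1}(X)$. The forms $\tilde\omega_j$ already satisfy $d\tilde\omega_0 = 0$ and $d\tilde\omega_{j} = \eta\wedge\tilde\omega_{j-1}$, but the constant term is $\tilde\omega_0$ rather than $\omega$ on the nose. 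To fix this, I would modify the cocycle by an exact element: replace $\Omega(t)$ by $\Omega(t) - d^L_{t\eta}(\gamma)$, which is still a cocycle in the same cohomology class, and whose constant term is now $\tilde\omega_0 - d\gamma = \omega$. (Concretely, the new forms are $\omega_0 = \omega$ and $\omega_j = \tilde\omega_j - \eta\wedge(\text{lower corrections})$, but the cleanest phrasing is just: any representative of the class with the desired constant term works, and such a representative exists because $\tau^{L,0}_\eta$ is a map of cochain complexes, so subtracting a coboundary over $\gamma$ adjusts only the constant term by $d\gamma$ while preserving the cocycle equations — here one uses that $d^L_{t\eta}\gamma$ has constant term $d\gamma$.) Writing out the new $\Omega(t) = \sum_j \omega_j t^j$ and applying the observation in reverse yields the desired sequence $\omega_1, \ldots, \omega_L$ with $d\omega_{\ell+1} = \eta\wedge\omega_\ell$ and $\omega_0 = \omega$.

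The only mild subtlety — the ``hard part,'' though it is not really hard — is the bookkeeping in the ``only if'' direction to ensure that after correcting the constant term from $\tilde\omega_0$ to $\omega$, the full system of equations still holds; this is automatic precisely because the correction is by $d^L_{t\eta}\gamma$, which is a cocycle-preserving operation, so no genuine obstruction appears. The case $L = \infty$ is handled identically, working in $\Omega^*(X) \otimes \R\llbracket t\rrbracket$ with no truncation, and the case $k$ even (where $t^2 = 0$) is subsumed by taking $L = 1$ in the odd-looking formulas, consistent with the footnote in the excerpt.
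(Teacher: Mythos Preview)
Your proof is correct and follows essentially the same approach as the paper: both directions hinge on the observation that $\sum_j \omega_j t^j$ is $d^L_{t\eta}$-closed if and only if $d\omega_0=0$ and $d\omega_{\ell+1}=\eta\wedge\omega_\ell$, and the ``only if'' direction is handled by correcting the constant term via an exact element. The only cosmetic difference is that the paper packages the correction step by citing Lemma~\ref{lem:surjective_exact} (surjectivity of truncation on exact elements) to produce the exact lift, whereas you write down the exact element $d^L_{t\eta}(\gamma)$ explicitly---which is exactly what the proof of that lemma yields when applied here.
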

\begin{proof}
	From the formula for $d^{L}_{t\eta}$, we find that an element
	$$\sum_{j=0}^{L}\omega_jt^j \in \scr{D}^L_{\eta}(X)$$
	is closed if and only if $d\omega_0 = 0$ and $d\omega_{\ell+1} = \eta \wedge \omega_\ell$ for all $0 \leq \ell < L$, and so the reverse implication is immediate. For the forward implication, suppose $[\omega] \in V^L_{[\eta]}$. Then there is some closed element
	$$\alpha:=\sum_{j=0}^{L}\alpha_jt^j \in \scr{D}^L_{\eta}(X)$$
	such that
	$$[\omega] = [\tau^{L,0}_{\eta}(\alpha)] = [\alpha_0].$$
	By Lemma \ref{lem:surjective_exact}, we have that the exact form $\omega-\alpha_0$ may be written as
	$$\omega-\alpha_0 =\tau_{\eta}^{L,0}(\beta)$$
	for some exact $\beta \in \scr{D}^L_{\eta}(X)$. Hence, $\alpha+\beta$ is still closed, and
	$$\omega = \tau_{\eta}^{L,0}(\alpha+\beta)$$
	at the cochain level, not just on cohomology. The coefficients of $\alpha+\beta$ yield the desired sequence $\omega_0 = \omega, \omega_1, \ldots,\omega_L$.
\end{proof}

We record one more property from our algebraic package: the vector space $V^L_{\mu}$ in fact only depends on the conformal class of $\mu$ as opposed to the specific cohomology class.

\begin{prop}\label{prop:scale_change}
	For $c \neq 0$ a nonzero constant and $\mu \in H^k(X;\R)$,
	$$V^L_{\mu} = V^L_{c\mu}.$$
	In other words, a cohomology class $\alpha \in H^*(X;\R)$ is $L$-jet deformable in the direction of $\mu$ if and only if it is $L$-jet deformable in the direction of $c\mu$.
\end{prop}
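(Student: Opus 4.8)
The plan is to deduce this directly from the scale change isomorphisms $s^L_{\eta;c}$ together with their compatibility with truncation, both established in Proposition \ref{prop:cochain_map_properties}. Fix a closed $k$-form $\eta$ representing $\mu$; then $c\eta$ represents $c\mu$. The scale change map $s^L_{\eta;c}\colon \scr{D}^L_{\eta}(X) \to \scr{D}^L_{c\eta}(X)$ is an isomorphism of cochain complexes, hence induces an isomorphism on cohomology $H^*(\scr{D}^L_{\eta}(X)) \xrightarrow{\sim} H^*(\scr{D}^L_{c\eta}(X))$. The point is that on the $L=0$ truncation, $s^0_{\eta;c}$ is the identity of $(\Omega^*(X),d)$: since $t$ has been killed (only the $t^0$ coefficient survives), the rescaling $\alpha_0 \mapsto \alpha_0$ does nothing. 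Thus we are in exactly the situation of the commuting square used to define $V^L_{[\eta]}$, but with $s$ in place of $\Phi$.

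First I would write down the commutative square
\[\xymatrix{H^*(\scr{D}_{\eta}^L(X)) \ar^-{\tau^{L,0}_{\eta}}[r] \ar[d]_{s^L_{\eta;c}}^{\cong} & H^*(\scr{D}_{\eta}^0(X)) \ar[d]_{s^0_{\eta;c}} \ar@{=}[r] & H^*(X;\R) \ar@{=}[d] \\ H^*(\scr{D}_{c\eta}^L(X)) \ar^-{\tau^{L,0}_{c\eta}}[r] & H^*(\scr{D}_{c\eta}^0(X)) \ar@{=}[r] & H^*(X;\R)}\]
which commutes by the ``commutativity of truncation and scale change'' bullet of Proposition \ref{prop:cochain_map_properties}, noting that the right square is the identification of $\scr{D}^0$ with $(\Omega^*(X),d)$ and that $s^0_{\eta;c} = \mathrm{id}$ under this identification. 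Then I would observe that the left vertical arrow is an isomorphism (it is the cohomology map of the isomorphism $s^L_{\eta;c}$, whose inverse is $s^L_{c\eta;1/c}$, again by Proposition \ref{prop:cochain_map_properties}), so the images of the two horizontal composites in $H^*(X;\R)$ coincide. By definition these images are $V^L_{\mu}$ and $V^L_{c\mu}$, giving $V^L_{\mu} = V^L_{c\mu}$. The last sentence of the proposition is then just the restatement of this equality via Definition \ref{def:L-jet_deform}. This handles $L = \infty$ as well, since all the maps and the defining diagram make sense there verbatim.

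There is essentially no hard part here; this is a formal consequence of the package in Proposition \ref{prop:cochain_map_properties}. The only thing to be careful about is the claim that $s^0_{\eta;c}$ is the identity: one should check that under the canonical isomorphism $\scr{D}^0_{\eta}(X) \cong (\Omega^*(X),d)$ (which simply reads off the $t^0$-coefficient, as in Example \ref{exam:no_deform}), the formula $s^L_{\eta;c}\bigl(\sum_j \alpha_j t^j\bigr) = \sum_j c^{-j}\alpha_j t^j$ does indeed act trivially on the surviving coefficient $\alpha_0$. I expect this to be the ``main obstacle'' only in the sense that it is the one place where one must unwind a definition rather than cite a bullet point; everything else is diagram-chasing. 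One could alternatively give the one-line proof at the level of $\infty$-jet deformability directly: if $\omega_1,\dots,\omega_L$ solve $d\omega_{\ell+1} = \eta\wedge\omega_\ell$ with $\omega_0 = \omega$, then $\omega_1/c,\, \omega_2/c^2,\dots,\omega_L/c^L$ solve $d\omega'_{\ell+1} = (c\eta)\wedge\omega'_\ell$ with $\omega'_0 = \omega$, and conversely, which is precisely the content of the scale change map written out in coordinates; I would likely include this as a remark since it is the most transparent form of the argument.
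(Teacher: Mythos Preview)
Your proposal is correct and follows essentially the same approach as the paper's proof: both use the commutative square built from the scale change isomorphism $s^L_{\eta;c}$ and its compatibility with truncation (Proposition \ref{prop:cochain_map_properties}), then read off equality of images in $H^*(X;\R)$. Your write-up is simply more explicit than the paper's (spelling out that $s^0_{\eta;c}$ is the identity on the $t^0$-coefficient, and noting the direct rescaling argument $\omega_\ell \mapsto c^{-\ell}\omega_\ell$), but there is no substantive difference in method.
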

\begin{proof}
	Following the same reasoning as before but switching gauge change for scale change, the commutative diagrams
	\[\xymatrix{H^*(\scr{D}_{\eta}^L(X)) \ar^-{\tau^{L,0}_{\eta}}[r] \ar[d]_{s^L_{\eta;c}} & H^*(\scr{D}_{\eta}^0(X)) \ar[d]_{s^0_{\eta;c}} \ar@{=}[r]^-{\sim} & H^*(X;\R) \ar@{=}[d] \\
		H^*(\scr{D}_{c\eta}^L(X)) \ar^-{\tau^{L,0}_{c\eta}}[r] & H^*(\scr{D}_{c\eta}^0(X)) \ar@{=}[r]^-{\sim} & H^*(X;\R)}\]
	(where $[\eta] = \mu \in H^k(X;\R)$) imply that
	$$V^L_{\mu} = \mathrm{Im}(\tau^{L,0}_{\eta}) = \mathrm{Im}(\tau^{L,0}_{c\eta}) = V^L_{c\mu}.$$
	
\end{proof}

\subsection{Computability of $L$-jet deformability}

It is natural to try to understand the vector spaces $V^L_{[\eta]}$ for each $L < \infty$. We begin by highlighting that it suffices to study the space in each degree. That is, since each $\tau^{L,0}_{\eta}$ preserves degree, we have that
$$V^L_{[\eta]} = \bigoplus_{r = 0}^{\infty} \tau^{L,0}_{\eta}(H^r(\scr{D}^L_{\eta}(X))) = \bigoplus_{r = 0}^{\infty} (V^L_{[\eta]} \cap H^r(X;\R)).$$
It hence suffices to study the subspaces
$$V^{L,r}_{[\eta]} := \tau^{L,0}_{\eta}(H^r(\scr{D}^L_{\eta}(X))) = V^L_{[\eta]} \cap H^r(X;\R)$$
for each $r \geq 0$. In order to understand them, we shall find a finite spanning set for each $H^r(\scr{D}^L_{\eta}(X))$, the image of which under $\tau^{L,0}_{\eta}$ yields a spanning set for $V^{L,r}_{[\eta]}$.

\begin{prop} \label{prop:fin-dim} 
	For each $L < \infty$ and $r \geq 0$, we have
	$$\dim H^r(\scr{D}_{\eta}^{L}(X)) \leq \sum_{j=0}^{L} \dim H^{r+j(k-1)}(X;\R).$$
\end{prop}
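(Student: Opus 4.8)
The plan is to filter the complex $\scr{D}_{\eta}^L(X)$ by powers of $t$ and compute the associated spectral sequence, whose first page is built entirely out of ordinary de Rham cohomology $H^*(X;\R)$. Concretely, set $F^p = t^p \Omega^*(X) \otimes \R\llbracket t\rrbracket/(t^{L+1})$ for $0 \le p \le L$; since $d_{t\eta}^L(\omega \otimes t^j) = d\omega \otimes t^j - (\eta \wedge \omega)\otimes t^{j+1}$, the differential maps $F^p$ into itself, so this is a decreasing filtration of the cochain complex by subcomplexes, and it is finite (it has only $L+1$ steps). The associated graded $\mathrm{gr}^p = F^p/F^{p+1}$ is just a copy of $(\Omega^*(X), d)$ sitting in $t$-degree $p$, because the $\eta$-term raises $t$-degree and hence dies in the associated graded. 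Therefore the $E_1$ page of the resulting spectral sequence is $E_1^{p,q} = H^{p+q-p(k-1)}(X;\R)$ appearing as the $t^p$-summand — more plainly, in total degree $r$ the $E_1$ page is $\bigoplus_{j=0}^{L} H^{r + j(k-1)}(X;\R)$, where the $j$-th summand records the $t^j$-coefficient, whose underlying form has degree $r - j(1-k) = r + j(k-1)$ (matching the convention $|t| = 1-k$).

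The key step is then the standard convergence statement: because the filtration is finite and bounded, the spectral sequence converges to $H^*(\scr{D}_{\eta}^L(X))$, and for each total degree $r$ we get
$$\dim H^r(\scr{D}_{\eta}^L(X)) = \sum_{p} \dim E_\infty^{p,\,r-p} \le \sum_{p} \dim E_1^{p,\,r-p} = \sum_{j=0}^{L} \dim H^{r+j(k-1)}(X;\R),$$
since each successive page is a subquotient of the previous one and hence has dimension no larger. (I should note that since $X$ is a closed manifold all these de Rham cohomology groups are finite-dimensional, so the right-hand side is finite and the inequality is meaningful.) This is exactly the claimed bound.

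If one prefers to avoid invoking spectral sequence machinery wholesale, the same argument can be phrased by hand as an induction on $L$ using the truncation maps: the short exact sequence of complexes $0 \to t^L\Omega^*(X) \to \scr{D}_{\eta}^L(X) \xrightarrow{\tau^{L,L-1}_\eta} \scr{D}_{\eta}^{L-1}(X) \to 0$ — where the sub is a copy of $(\Omega^*(X),d)$ shifted into $t$-degree $L$ — yields a long exact sequence in cohomology, from which $\dim H^r(\scr{D}_{\eta}^L(X)) \le \dim H^r(\scr{D}_{\eta}^{L-1}(X)) + \dim H^{r+L(k-1)}(X;\R)$, and the base case $L=0$ is Example \ref{exam:no_deform}. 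The main thing to get right in either approach is the bookkeeping of degrees: one must carefully track that the $t^j$-coefficient of a degree-$r$ element of $\scr{D}_\eta^L(X)$ is a form of degree $r - j(1-k) = r + j(k-1)$, so that the de Rham groups appearing are indexed exactly as in the statement. No genuine obstacle is expected beyond this indexing care; the inequality (as opposed to equality) is precisely what absorbs the unknown behavior of the higher differentials / connecting maps.
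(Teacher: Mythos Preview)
Your proposal is correct, and your second (inductive) approach is exactly the paper's proof: the short exact sequence $0 \to \Omega^*(X)[-L(k-1)] \xrightarrow{\omega \mapsto \omega t^L} \scr{D}^L_\eta(X) \xrightarrow{\tau^{L,L-1}_\eta} \scr{D}^{L-1}_\eta(X) \to 0$ and the resulting dimension inequality by induction on $L$. Your spectral sequence version is just the same argument repackaged, since the associated graded of your $t$-adic filtration reproduces precisely these short exact sequences step by step.
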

\begin{proof}
	The first inequality is obvious. For each $0 < L < \infty$, let $\Omega^*(X)[-L(k-1)]$ denote the usual de Rham complex shifted in degree so that e.g. the degree zero part of this new complex is given by $\Omega^{L(k-1)}(X)$. The map
	$$\Psi^L_{\eta} \colon \Omega^*(X)[-L(k-1)] \rightarrow \scr{D}_{\eta}^L(X)$$
	given by
	$$\Psi^L_{\eta}(\omega) = \omega \cdot t^L~(\mathrm{mod}~t^{L+1})$$
	is a map of cochain complexes, since it preserves degree and intertwines the differential:
	$$\Psi^L_{\eta}(d\omega) = d\omega \cdot t^L = d^L_{t\eta}(\omega \cdot t^L) = d^L_{t\eta}(\Psi^L_{\eta}(\omega)).$$
	Furthermore, it fits into a short exact sequence of cochain complexes
	$$0 \rightarrow \Omega^*(X)[-L(k-1)] \xrightarrow{\Psi^{L}_{\eta}} \scr{D}^{L}_{\eta}(X) \xrightarrow{\tau^{L,L-1}_{\eta}} D^{L-1}_{\eta}(X) \rightarrow 0.$$
	Hence, the long exact sequence in cohomology reads
	$$\cdots \rightarrow H^{r+L(k-1)}(X;\R) \rightarrow H^r(\scr{D}^{L}_{\eta}(X)) \rightarrow H^r(\scr{D}^{L-1}_{\eta}(X)) \rightarrow \cdots$$
	from which it follows that
	$$\dim H^r(\scr{D}^{L}_{\eta}(X)) \leq \dim H^{r+L(k-1)}(X;\R) + \dim H^r(\scr{D}^{L-1}_{\eta}(X)).$$
	The result now follows by induction, where the base case $L=0$ is clear.
\end{proof}

The proof of Proposition \ref{prop:fin-dim} provides an algorithm for computing a spanning set for the vector spaces $H^r(\scr{D}^{L}_{\eta}(X))$, and hence $V^{L,r}_{[\eta]}$ after applying $\tau^{L,0}_{\eta}$, via induction on $L$, as we now explain. Let us write
$$\scr{C}_{\eta}^{L,r}(X) := \mathrm{Im}(H^r(\scr{D}_{\eta}^{L+1}(X))\xrightarrow{\tau^{L+1,L}_{\eta}} H^r(\scr{D}_{\eta}^{L}(X))),$$
so that the long exact sequence considered in the proof above may be truncated to an exact sequence of the form
$$\cdots \rightarrow H^{r+(L+1)(k-1)}(X;\R) \xrightarrow{\Psi^{L+1}_{\eta}} H^r(\scr{D}^{L+1}_{\eta}(X)) \xtwoheadrightarrow{\tau^{L+1,L}_{\eta}} \scr{C}^{L,r}_{\eta}(X) \rightarrow 0.$$
Assume by induction that we have a finite spanning set
$$\{[v_1],\ldots, [v_M]\} \subset H^r(\scr{D}_{\eta}^{L}(X)),$$
where each $v_\ell = \sum_{j=0}^{L}v_{j\ell}t^j \in \scr{D}_{\eta}^{L}(X)$ for some $v_{j\ell} \in \Omega^r(X)$. 

\begin{lem}
	The form $\eta \wedge \sum_{\ell=0}^{M}c_kv_{L\ell}$ is exact if and only if $\sum_{\ell=0}^{M}c_\ell[v_\ell] \in \scr{C}_{\eta}^{L,r}(X)$.
\end{lem}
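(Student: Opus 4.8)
The plan is to recognize both conditions in the statement as the vanishing of a single cohomology class, namely the image of $\sum_\ell c_\ell[v_\ell]$ under the connecting homomorphism of the long exact sequence appearing in the proof of Proposition~\ref{prop:fin-dim}, with the $L$ there replaced by $L+1$.

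First I would set $v := \sum_\ell c_\ell v_\ell = \sum_{j=0}^{L} w_j\, t^j \in \scr{D}_\eta^L(X)$, where $w_j := \sum_\ell c_\ell v_{j\ell} \in \Omega^r(X)$. Since each $v_\ell$ is a $d^L_{t\eta}$-cocycle, so is $v$, and by the explicit formula for $d^L_{t\eta}$ this means $dw_0 = 0$ and $dw_j = \eta \wedge w_{j-1}$ for $1 \le j \le L$. In particular $w_L = \sum_\ell c_\ell v_{L\ell}$, so the form in the statement is precisely $\eta \wedge w_L$, and the class $\sum_\ell c_\ell[v_\ell]$ is exactly $[v]$. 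Next I would invoke the short exact sequence $0 \to \Omega^*(X)[-(L+1)(k-1)] \xrightarrow{\Psi^{L+1}_\eta} \scr{D}_\eta^{L+1}(X) \xrightarrow{\tau^{L+1,L}_\eta} \scr{D}_\eta^L(X) \to 0$ and read off from its long exact sequence a connecting map $\delta \colon H^r(\scr{D}_\eta^L(X)) \to H^{r+1+(L+1)(k-1)}(X;\R)$ whose kernel is $\mathrm{Im}(\tau^{L+1,L}_\eta) = \scr{C}_\eta^{L,r}(X)$. The crux is the computation of $\delta[v]$: a set-theoretic lift of $v$ to $\scr{D}_\eta^{L+1}(X)$ is $v$ itself with vanishing $t^{L+1}$-coefficient, and applying $d^{L+1}_{t\eta}$ to it kills all coefficients of $t^0,\dots,t^L$ (because $v$ is a cocycle in $\scr{D}_\eta^L(X)$) and leaves $-\eta \wedge w_L$ as the coefficient of $t^{L+1}$. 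This lies in the image of $\Psi^{L+1}_\eta$, and it represents a genuine cocycle in the shifted de Rham complex because $d(\eta\wedge w_L) = -\eta\wedge dw_L = -\eta\wedge\eta\wedge w_{L-1} = 0$, using that $k$ is odd so $\eta\wedge\eta=0$ (when $L=0$ one uses $dw_0=0$ directly). Hence $\delta[v]$ is, up to sign, the class of $\eta\wedge w_L$ in $H^{r+1+(L+1)(k-1)}(X;\R)$, so $\delta[v]=0$ exactly when $\eta\wedge w_L$ is an exact form on $X$; by exactness of the long exact sequence this is equivalent to $[v]\in\scr{C}_\eta^{L,r}(X)$, which is the claim.

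I do not expect a genuine obstacle here; the only points needing care are the bookkeeping of the degree shift in $\Omega^*(X)[-(L+1)(k-1)]$ and the (sign-irrelevant) verification that $\eta\wedge w_L$ defines a class there. As an alternative route that sidesteps the explicit connecting map, one can argue directly that $[v]\in\scr{C}_\eta^{L,r}(X)$ if and only if $v$ admits a $d^{L+1}_{t\eta}$-closed lift in the normal form $v + w_{L+1}t^{L+1}$ — reducing a general lift to this form by absorbing an exact correction term via Lemma~\ref{lem:surjective_exact} — and such a lift is closed precisely when $dw_{L+1} = \eta\wedge w_L$, i.e. when $\eta\wedge w_L$ is exact.
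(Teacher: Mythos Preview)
Your proof is correct. The paper argues by direct construction: for the forward direction it writes down the closed lift $\sum_j\bigl(\sum_\ell c_\ell v_{j\ell}\bigr)t^j + \alpha\, t^{L+1}$ explicitly, and for the converse it uses Lemma~\ref{lem:surjective_exact} to arrange a closed $\gamma \in \scr{D}^{L+1}_\eta(X)$ with $\tau^{L+1,L}_\eta(\gamma) = \sum_\ell c_\ell v_\ell$ at the cochain level, then reads off $d\gamma_{L+1} = \eta\wedge\gamma_L$. This is exactly the ``alternative route'' you sketch at the end. Your primary argument via the connecting homomorphism $\delta$ of the long exact sequence is a cleaner repackaging of the same computation: the lift-then-differentiate step that produces $-\eta\wedge w_L$ as the $t^{L+1}$-coefficient \emph{is} the standard recipe for $\delta$, and exactness of the long exact sequence replaces the explicit appeal to Lemma~\ref{lem:surjective_exact}. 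The two approaches are equivalent in content; yours has the advantage of identifying the obstruction class $[\eta\wedge w_L]$ as a value of $\delta$, which makes the inductive structure of the algorithm following the lemma more transparent.
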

\begin{proof}
	If the wedge product is exact, so that we may write
	$$\eta \wedge \sum_{\ell=0}^{M}c_\ell v_{L\ell} = d\alpha,$$
	then
	$$\sum_{\ell=0}^{M} c_\ell[v_\ell] = \left[\tau^{L+1,L}_{\eta}\left(\sum_{j=0}^{L}\left(\sum_{\ell=0}^{M}c_\ell v_{j\ell}\right) + \alpha \cdot t^{L+1}\right)\right] \in \scr{C}_{\eta}^{L,r}(X).$$

	Conversely, if $\sum_{\ell=0}^{M(L)}c_\ell[v_\ell] \in \scr{C}^{L,r}_{\eta}(X)$, then at the cochain level, we have 
	$$\sum_{\ell=0}^{M} c_\ell v_\ell = \alpha + \tau^{L+1,L}_{\eta}\beta$$
	where $\alpha \in \scr{D}^L_{\eta}(X)$ is exact, $\beta \in \scr{D}^{L+1}_{\eta}(X)$ is closed, and $\alpha$ and $\beta$ have degree $r$. By Lemma \ref{lem:surjective_exact}, $\alpha$ is itself the image of an exact element under $\tau^{L+1,L}_{\eta}$; hence,
	$$\sum_{\ell=0}^{M} c_\ell v_\ell = \tau^{L+1,L}_{\eta}(\gamma),$$
	where $\gamma \in \scr{D}^{L+1}_{\eta}(X)$ is closed (of degree $r$), . Writing $\gamma = \sum_{j=0}^{L+1}\gamma_jt^j$, we have $\gamma_L = \sum_{k=0}^{M}c_\ell v_{L\ell}$. On the other hand, since $\gamma$ is closed, $$d\gamma_{L+1} = \eta \wedge \gamma_L = \eta \wedge \sum_{k=0}^{M}c_\ell v_{L\ell}$$ is exact.
\end{proof}

Thinking of the spanning set as a surjective map $\R^{M} \twoheadrightarrow H^r(\scr{D}^L_{\eta}(X))$, the lemma gives necessary and sufficient linear conditions on elements of $\R^{M}$ for the image to land in $\scr{C}^{L,r}_{\eta}$, and taking a basis for this subspace of $\R^{M}$ therefore yields a spanning set $[w_1],\ldots,[w_{M'}]$ for $\scr{C}^{L,r}_{\eta}$. For each $w_j$, we can choose a lift $[\wt{w}_j] \in H^r(\scr{D}^{L+1}_{\eta}(X)$. Then
$$H^r(\scr{D}_{\eta}^{L+1}(X) = \mathrm{Im}(\Psi^{L+1}_{\eta})+\langle[\wt{w}_1], \ldots, [\wt{w}_{M'}]\rangle$$
where the first summand clearly has a spanning set by taking the images of a basis of $H^r(X;\R)$. Hence, we have inductively constructed a spanning set for $H^r(\scr{D}^{L+1}_{\eta}(X))$ out of a spanning set for $H^r(\scr{D}^{L}_{\eta}(X))$.

\begin{rmk}
	It is harder in general to choose a \emph{basis} for each $H^r(\scr{D}^L_{\eta}(X))$, which would essentially require one to understand which elements of $H^{r+L(k-1)}(X;\R)$ generate equivalent elements under $\Psi^L_{\eta}$ for each $L$. This is a difficult computation in general unless we are in rather restrictive settings.
\end{rmk}

\begin{exam} \label{exam:nilmanifold}
	Let us compute $H^r(\scr{D}^L_{\eta}(X))$ in an explicit example. Consider the $4$-dimensional Lie group
	$$G_{\R} = H_{\R} \oplus \R,$$
	where $H_{\R}$ is the Heisenberg group
	$$H_{\R} = \left\{\begin{pmatrix}1 & a & b \\ 0 & 1 & c \\ 0 & 0 & 1\end{pmatrix} \middle| a,b,c \in \R\right\}.$$
	We note that requiring $a,b,c \in \Z$ yields a Lie subgroup $H_{\Z} \leq H_{\R}$. We may therefore take the quotient
	$$X = G_{\R}/G_{\Z} = (H_{\R}\oplus \R)/(H_{\Z} \oplus \Z) = H_{\R}/H_{\Z} \times \R/\Z.$$
	Because it is a closed nilmanifold, a classical result of Nomizu \cite{Nomizu} states that the inclusion of the subcomplex of left-invariant forms into the full de Rham dg-algebra is a quasi-isomorphism (meaning an isomorphism on cohomology). This subcomplex is computed as the Chevalley--Eilenberg dg-algebra $(\Lambda^*\mathfrak{g}^*,d)$. Explicitly, if $A,B,C,T$ are dual to $\partial_a,\partial_b,\partial_c,\partial_t$ (where $\partial_t$ spans the $\R$-direction), then the complex is generated by the relations
	$$dA = dB = dT = 0, \qquad dC = AB.$$
	(Note that we leave the wedge product out of the notation for convenience, so $AB$ really means $A \wedge B$, thinking of $A$ and $B$ as left-invariant forms.) Explicitly, writing cohomology as closed forms modulo exact forms, we have
	\begin{eqnarray*}
		H^0(X;\R)  &=& \mathrm{span}(1) \\
		H^1(X;\R) &=& \mathrm{span}(A,B,C,T)/\mathrm{span}(C) \\
		H^2(X;\R) &=& \mathrm{span}(AB,AC,AT,BC,BT)/\mathrm{span}(AB) \\
		H^3(X;\R) &=& \mathrm{span}(ABZ,ABT,ACT,BCT)/\mathrm{span}(ABT) \\
		H^4(X;\R) &=& \mathrm{span}(ABCT)
	\end{eqnarray*}
	With this out of the way, suppose we take the class $\eta = A$ (of degree $k=1$) and $r=2$ and compute the vector spaces $H^2(\scr{D}^L_{A}(X))$. We begin with the trivial case $L=0$:
	$$H^2(\scr{D}^0_{A}(X)) = H^2(X;\R).$$
	Take as a basis for $H^2(X;\R)$ the forms $AC,AT,BC,BT$, so that a general element of $H^2(\scr{D}^0_{A}(X)$ may be written as
	$$[\omega_0] = [\alpha_0AC+\beta_0AT+\gamma_0BC+\delta_0BT].$$
	We see that
	$$\eta \wedge \omega_0 = \gamma_0ABC + \delta_0ABT$$
	is exact if and only if $\gamma_0 = 0$, with primitive $\delta_0 CT$, i.e.
	$$d\left(\delta_0 CT\right) = \delta_0 ABT = \eta \wedge \omega_0.$$
	We hence find that
	$$H^2(\scr{D}^1_{A}(X)) = \left\{\left[\begin{matrix}(\alpha_0AC+\beta_0AT+\delta_0BT) \\ +(\delta_0CT + \alpha_1AC+\beta_1AT+\gamma_1BC+\delta_1BT)t \end{matrix}\right]\right\},$$
	where the terms $\alpha_1AC +\beta_1AT+\gamma_1BC+\delta_1BT$ in the second coordinate correspond to the image of $\Psi^1_A$. Extending further, we see that
	\begin{eqnarray*}\eta \wedge \omega_1 &=& A \cdot (\delta_0CT + \alpha_1AC+\beta_1AT+\gamma_1BC+\delta_1BT) \\
		&=& \delta_0ACT + \gamma_1ABC+\delta_1ABT.
	\end{eqnarray*}
	In order for this to be exact, we now need $\delta_0 = \gamma_1 = 0$. We find
	$$H^2(\scr{D}^2_{A}(X)) = \left\{\begin{bmatrix}(\alpha_0AC+\beta_0AT) \\  +(\alpha_1AC+\beta_1AT+\delta_1BT)t \\
	+(\delta_1CT + \alpha_2AC+\beta_2AT+\gamma_2BC+\delta_2BT)t^2\end{bmatrix}\right\}.$$
	Continuing on, it is not hard to see that
	$$H^2(\scr{D}^L_{A}(X)) = \left\{\begin{bmatrix}(\alpha_0AC+\beta_0AT) \\  +(\alpha_1AC+\beta_1AT)t \\
		+(\alpha_2AC+\beta_2AT)t^2 \\
		\vdots \\
		+(\alpha_{L-2}AC+\beta_{L-2}AT)t^{L-2} \\
		+(\alpha_{L-1}AC+\beta_{L-1}AT+\delta_{L-1}BT)t^{L-1}\\
		+(\delta_{L-1}CT + \alpha_LAC+\beta_LAT+\gamma_LBC+\delta_LBT)t^{L}\end{bmatrix}\right\}.$$
	In the case of $L = \infty$, we see that
	$$H^r(\scr{D}^{\infty}_A(X)) =  \left\{\begin{bmatrix}(\alpha_0AC+\beta_0AT) \\  +(\alpha_1AC+\beta_1AT)t \\
		+(\alpha_2AC+\beta_2AT)t^2 \\
		\vdots \end{bmatrix}\right\}.$$
	We find in particular that for every $2 \leq L \leq \infty$,
	$$V^{L,2}_A(X) = \mathrm{span}\langle [AC],[AT] \rangle,$$
	whereas
	$$V^{1,2}_A(X) = \mathrm{span}\langle [AC],[AT],[BT] \rangle.$$
	For example, $[BT]$ is $1$-jet deformable, but not $2$-jet deformable.

	It is worth remarking that the expressions for $H^r(\scr{D}^L_A(X))$ are written in terms of spanning sets, not bases. For example, note that
	$$\Psi^2_{A}([AC]) = [ACt^2] = [d_{tA}(-B+tC)] = 0,$$
	and hence the parameter $\alpha_2$ is extraneous in our expression for $H^2(\scr{D}^2_{A}(\Omega^*(X),d))$.
\end{exam}

\subsection{Proof of Main Theorem}

We have all the tools we need to prove Theorem \ref{thm:main} from the introduction, our main result in this article.

\begin{proof}[Proof of Theorem \ref{thm:main}]
	We are given $Y \subset X$ is an exact submanifold, cooriented (as per our implicit assumption that $Y$ is oriented from the introduction), and representing the class $Z \in H_{N-k}(X)$. Recall, e.g. in the book of Bott and Tu \cite[Section 6]{Bott-Tu} that $PD(Z)$ may be represented by a differential form giving the Thom class of the normal bundle, i.e. by a closed $k$-form $\eta \in \scr{Z}^k(X)$ compactly supported on a tubular neighborhood $U$ of $Y$.
	
	Pick a closed $r$-form $\omega \in \Omega^r(X)$ representing the class $\alpha$. Since $Y$ is exact, and since the collar neighborhood $U$ deformation retracts to $Y$, we have that $\omega|_U$ is also exact on $U$. Choose a primitive $\theta \in \Omega^{r-1}(U)$ so that $\omega|_U = d\theta$. Because $\eta$ is zero outside of $U$, we obtain a smooth $(r+k-1)$-form $\beta$ given by the extension by $0$ of the form $-\eta \wedge \theta$. One computes
	$$\eta \wedge \omega = d\beta,$$
	hence proving the third condition in the statement of the theorem: regardless of the parity of $k$ and $r$, we have $PD(Z) \wedge \alpha = 0 \in H^{r+k}(X;\R)$.
	
	Suppose now that $k$ is odd. We see that $\eta \wedge \beta = 0$ exactly. In other words, $\omega+t\beta$ is a closed element of $\scr{D}^{\infty}_{\eta}(X)$, regardless of the parity. Hence, $[\omega] \in V^{\infty}_{[\eta]} = V^{\infty}_{PD(Z)}$.
	
	Suppose now that $r$ is odd. Let $\eta_{\ell}$ denote the extension by zero of the form $\frac{1}{\ell!}\theta^{\ell} \wedge \eta$ for $\ell \geq 0$. For example, $\eta_0 = \eta$. We see that $d\eta_{\ell+1} = \omega \wedge \eta_{\ell}$ for all $\ell \geq 0$, and so $\sum_{\ell=0}^{\infty} \eta_{\ell}t^{\ell}$ is a closed element of $\scr{D}_{\omega}^{\infty}(X)$. Hence, $[\eta] \in V^{\infty}_{[\omega]}$.
\end{proof}

We record the following corollary, which follows just by considering the first obstruction ($1$-jet deformability).

\begin{cor}\label{cor:inject_is_torsion}
	Suppose we have a pair $(X,\alpha)$ with $X$ a closed oriented manifold and $\alpha \in H^r(X;\R)$ a cohomology class of homogeneous degree $r$. If the map $\Phi_{\alpha} \colon H^k(X;\R) \rightarrow H^{r+k}(X;\R)$ given by
	$$\Phi_{\alpha}(\mu) = \alpha \wedge \mu$$
	is injective, then all exact submanifolds of codimension $k$ are torsion in homology. In particular, for $k=1$, there are no non-separating exact hypersurfaces.
\end{cor}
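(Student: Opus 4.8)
The plan is to read the corollary straight off the third bullet of Theorem~\ref{thm:main}, which is the one obstruction that holds with no parity hypothesis on $k$ or $r$. So I would start by taking an exact submanifold $Y^{N-k}\subset(X,\alpha)$ representing a class $Z\in H_{N-k}(X)$; Theorem~\ref{thm:main} then yields $PD(Z)\wedge\alpha=0\in H^{r+k}(X;\R)$, where $PD(Z)\in H^k(X;\R)$ is the real image of the integral Poincar\'e dual. In the notation of the corollary this is precisely $\Phi_{\alpha}(PD(Z))=0$, so if $\Phi_{\alpha}$ is injective we get $PD(Z)=0\in H^k(X;\R)$.

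The next step is to translate the vanishing of this \emph{real} class into a statement about the integral class $PD(Z)\in H^k(X)$, and hence about $Z$. The map $H^k(X)\to H^k(X;\R)$ factors through $H^k(X;\Q)$, and by flatness of $\Q$ and $\R$ over $\Z$ one has $H^k(X;\Q)\cong H^k(X)\otimes\Q$, into which $H^k(X)/(\text{torsion})$ injects; thus the kernel of $H^k(X)\to H^k(X;\R)$ is exactly the torsion subgroup. Therefore $PD(Z)\in H^k(X)$ is a torsion class, and since Poincar\'e duality $H_{N-k}(X)\xrightarrow{\sim}H^k(X)$ is an isomorphism of abelian groups it carries torsion to torsion, so $Z$ is torsion in $H_{N-k}(X)$. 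This gives the first assertion. (One genuinely cannot do better in general: the obstruction is invariant under rescaling $PD(Z)$, cf.\ Remark~\ref{rmk:Steenrod}, so it can only detect $Z$ up to torsion.)

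For the last sentence I would specialize to $k=1$, where the extra input is that $H^1(X)=\mathrm{Hom}(H_1(X),\Z)$ is torsion-free; so ``torsion'' upgrades to ``zero'', and an exact hypersurface $Y$ has $PD[Y]=0\in H^1(X)$, hence $[Y]=0\in H_{N-1}(X)$. It then remains to note that a non-separating hypersurface is never null-homologous: since $Y$ is cooriented it has a product tubular neighborhood $Y\times(-\epsilon,\epsilon)$, and if $X\setminus Y$ is connected one joins $Y\times\{-\epsilon/2\}$ to $Y\times\{\epsilon/2\}$ by a path missing $Y$ and closes it up through the neighborhood to get a loop $\gamma$ crossing $Y$ exactly once (with sign $\pm1$), so that $\langle PD[Y],[\gamma]\rangle = Y\cdot\gamma=\pm1$ and $[Y]\neq0$. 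Combining, an exact hypersurface must be separating.

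I do not anticipate a real obstacle here, since all the geometric content is already packaged in Theorem~\ref{thm:main}; the only point requiring care is the bookkeeping between $\Z$-, $\Q$-, and $\R$-coefficients, in particular the fact that real-coefficient vanishing only pins $Z$ down modulo torsion, which is exactly why the clean statement ``$[Y]=0$'' in the $k=1$ case needs the separate observation that $H^1$ is torsion-free.
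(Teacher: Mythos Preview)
Your proof is correct and follows essentially the same approach as the paper's: apply the third bullet of Theorem~\ref{thm:main} to get $PD(Z)\wedge\alpha=0$, use injectivity of $\Phi_\alpha$ to force $PD(Z)=0$ in real cohomology, deduce that $Z$ is torsion integrally, and for $k=1$ use that $H^1(X)$ is torsion-free together with the equivalence between separating and null-homologous hypersurfaces. You have simply spelled out in more detail the coefficient-change argument and the intersection-number reason why non-separating implies non-trivial homology class, both of which the paper leaves implicit.
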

\begin{proof}
	Since $\Phi_{\alpha}$ is assumed to be injective, this implies that $0 = PD(Z) \in H^k(X;\R)$. This implies that integrally, $PD(Z)$ is torsion in $H^k(X)$, and hence $Z$ is torsion in $H_{N-k}(X)$. The final statement follows since if $k=1$, then $H^1(X) = H_{N-1}(X)$ is non-torsion, and a hypersurface represents the trivial class $Z = 0$ if and only if it is separating.
\end{proof}

\subsection{From $L$-jet deformability to $(L+1)$-jet deformability}

One expects in general that a class being $L$-jet deformable need not imply in general that it is $(L+1)$-jet deformable. For $L=0$, this is clear: every class is $0$-jet deformable in any direction, while as not every class is $1$-jet deformable in a given nonzero direction (or perhaps any nonzero direction, e.g. in the setting of Corollary \ref{cor:inject_is_torsion}). In Example \ref{exam:nilmanifold}, we indeed saw that the class $[BT]$ was $1$-jet deformable in the direction $[A]$ but not $2$-jet deformable in the direction $[A]$.

Nonetheless, there are some situations in which one can extrapolate higher deformability. One such reason is purely a matter of degree.
\begin{prop}
	Suppose $\alpha \in H^r(X;\R)$ and $\mu \in H^k(X;\R)$ with $k>1$ odd. Let
	$$L_0 := \left\lceil \frac{N-r}{k-1} \right\rceil-1.$$
	Then the following are equivalent:
	\begin{itemize}
		\item $\alpha \in V^{\infty}_{\mu}(X)$
		\item $\alpha \in V^L_{\mu}(X)$ for every $L \geq L_0$
		\item $\alpha \in V^{L_0}_{\mu}(X)$
	\end{itemize}
\end{prop}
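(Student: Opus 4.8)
The plan is to prove the cycle of implications $(1)\Rightarrow(2)\Rightarrow(3)\Rightarrow(1)$; the first two are formal, and all the content sits in $(3)\Rightarrow(1)$, which I expect to collapse to a bare degree count.

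First I would dispatch $(1)\Rightarrow(2)$: fixing a closed representative $\eta$ of $\mu$ and a class in $H^r(\scr{D}^\infty_\eta(X))$ mapping to $\alpha$ under $\tau^{\infty,0}_\eta$, I apply $\tau^{\infty,L}_\eta$ and invoke the cocycle identity $\tau^{L,0}_\eta\circ\tau^{\infty,L}_\eta=\tau^{\infty,0}_\eta$ from Proposition~\ref{prop:cochain_map_properties} to conclude $\alpha\in V^L_\mu$ for every $L$, in particular for every $L\ge L_0$. Then $(2)\Rightarrow(3)$ is just specialization to $L=L_0$; here I should note that $L_0\ge 1$ in the standing range $0\le r\le N-k$ (since then $(N-r)/(k-1)\ge k/(k-1)>1$), so that $V^{L_0}_\mu$ is even defined.

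The heart is $(3)\Rightarrow(1)$. Unwinding $\alpha\in V^{L_0}_\mu$ via the definition of $\tau^{L_0,0}_\eta$, I choose a closed cochain $\sum_{j=0}^{L_0}\omega_j t^j\in\scr{D}^{L_0}_\eta(X)$ with $[\omega_0]=\alpha$; closedness is exactly $d\omega_0=0$ together with $d\omega_{j+1}=\eta\wedge\omega_j$ for $0\le j\le L_0-1$, and --- crucially --- \emph{no} condition is imposed on $\eta\wedge\omega_{L_0}$, since that would be the $t^{L_0+1}$-coefficient, discarded modulo $t^{L_0+1}$. Because $t$ carries degree $1-k$, each $\omega_j$ lies in $\Omega^{r+j(k-1)}(X)$, so $\eta\wedge\omega_{L_0}$ has degree $k+r+L_0(k-1)$. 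The definition of $L_0$ gives $L_0\ge (N-r)/(k-1)-1$, hence $L_0(k-1)\ge N-r-(k-1)$ and therefore $k+r+L_0(k-1)\ge N+1>N=\dim X$. Thus $\eta\wedge\omega_{L_0}=0$ identically, and setting $\omega_j:=0$ for all $j>L_0$ yields a cochain $\sum_{j\ge 0}\omega_j t^j\in\scr{D}^\infty_\eta(X)$ that is still closed (the only new equations are $d\omega_{L_0+1}=\eta\wedge\omega_{L_0}$ and $d\omega_{j+1}=\eta\wedge\omega_j$ for $j>L_0$, all of which read $0=0$) and still maps to $[\omega_0]=\alpha$ under $\tau^{\infty,0}_\eta$. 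Hence $\alpha\in V^\infty_\mu$, which closes the cycle.

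I do not expect a serious obstacle: the real observation is simply that past level $L_0$ the tower equations $d\omega_{j+1}=\eta\wedge\omega_j$ have right-hand side of degree exceeding $\dim X$ and so are solved by zero. The only things to watch are (i) getting the ceiling arithmetic exactly right so that $k+r+L_0(k-1)>N$, and (ii) remembering that $\scr{D}^{L_0}_\eta(X)$ places no exactness demand on the top coefficient $\omega_{L_0}$, which is precisely what legitimizes the extension-by-zero. It is also worth remarking that the hypotheses are used only ambiently: $k>1$ is needed for $(N-r)/(k-1)$ to make sense and, more essentially, for the degree truncation to occur at all --- when $k=1$ all $\omega_j$ have degree $r$ and the tower need not terminate --- while the oddness of $k$ is just the running assumption of the section.
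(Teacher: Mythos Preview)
Your proof is correct and follows the same approach as the paper: the implications $(1)\Rightarrow(2)\Rightarrow(3)$ are dispatched formally, and $(3)\Rightarrow(1)$ is the degree count showing that $\eta\wedge\omega_{L_0}$ lives in degree $r+L_0(k-1)+k\geq N+1$ and hence vanishes identically, permitting the extension by zero. Your aside that $L_0\ge 1$ is needed ``so that $V^{L_0}_\mu$ is even defined'' is not quite right --- $V^0_\mu=H^*(X;\R)$ is perfectly well-defined --- but this is harmless and does not affect the argument.
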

\begin{proof}
	The first bullet point clearly implies the second, and the second implies the third. To get from the third to the first, suppose that we have fixed representatives $[\omega] = \alpha$ and $[\eta] = \mu$, so that there is some sequence $\omega_0 = \omega, \omega_1,\ldots,\omega_{L_0}$ with $d\omega_{\ell+1} = \eta \wedge \omega_{\ell}$ for $0 \leq \ell < L_0$. Notice that for degree reasons, $|\omega_j| = r+j(k-1)$. It follows that $\eta \wedge \omega_{L_0}$ has degree $r+L_0(k-1)+k \geq N+1$, and since $X$ has dimension $N$, this is automatically $0$. Hence, we may take $\omega_j = 0$ for $j > L_0$ to yield a sequence realizing $\infty$-jet deformability.
\end{proof}

In the case $k=1$, in which case there is no a priori degree reason why $L$-jet deformability should ever imply $(L+1)$-jet deformability. Nonetheless, Example \ref{exam:top_codim} provides an interesting such example in which precisely this phenomenon happens.
\begin{prop} \label{prop:1-to-infty}
	Suppose $X^N$ is a closed oriented manifold of dimension $N$, and let $\mu \in H^1(X;\R)$ and $\alpha \in H^{N-1}(X;\R)$ be such that
	$$\alpha \in V^1_{\mu}(X)$$
	(i.e. $\mu \wedge \alpha = 0 \in H^N(X;\R)$). If $\mu$ is a rational class (i.e. in the image of the (automatically injective) extension-of-scalars map $H^1(X;\Q) \rightarrow H^1(X;\R)$), then
	$$\alpha \in V^{\infty}_{\mu}(X).$$
\end{prop}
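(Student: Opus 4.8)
The plan is to reduce the statement to a statement about $S^1$-valued maps and then to a Künneth-type computation on a mapping torus. Since $\mu \in H^1(X;\R)$ is rational, some nonzero multiple $c\mu$ lies in the image of $H^1(X;\Z) \to H^1(X;\R)$; by Proposition \ref{prop:scale_change} we have $V^1_\mu = V^1_{c\mu}$ and $V^\infty_\mu = V^\infty_{c\mu}$, so we may assume $\mu$ is integral and is therefore classified by a smooth map $f\colon X \to S^1$. Pulling back the standard angular form $d\theta$ on $S^1$ gives a closed $1$-form $\eta$ with $[\eta] = \mu$ whose periods lie in $2\pi\Z$; equivalently, $e^{-\text{(antiderivative)}}$ makes sense globally, which is precisely the structure we need to globalize the formal primitives.

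Next I would produce the infinite sequence $\omega_1, \omega_2, \ldots$ explicitly. Fix a closed $(N-1)$-form $\omega_0 = \omega$ representing $\alpha$. The hypothesis $\alpha \in V^1_\mu$ says $\eta \wedge \omega_0 = d\omega_1$ for some $\omega_1$; but $\eta \wedge \omega_0$ is an $N$-form on the $N$-manifold $X$ and $[\eta\wedge\omega_0] = \mu\wedge\alpha = 0$, so this is automatic. The key observation is that $\eta\wedge\omega_1$ is an $(N+1)$-form on $X$, hence \emph{identically zero} for degree reasons. Therefore $d\omega_2 = \eta\wedge\omega_1$ is solved simply by $\omega_2 = 0$, and likewise $\omega_j = 0$ for all $j \geq 2$. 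In other words, the sequence $\omega_0 = \omega,\ \omega_1,\ 0,\ 0,\ \ldots$ already witnesses $\infty$-jet deformability, and the only thing to check is the very first step $d\omega_1 = \eta\wedge\omega_0$ — which is exactly the content of $\alpha \in V^1_\mu$.

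So in fact the rationality hypothesis is a red herring from the point of view of the argument I have sketched: the implication $V^1_\mu \subset V^\infty_\mu$ holds here purely because $X$ has dimension $N$ and $\alpha$ sits in top-minus-one degree, so that $\eta\wedge\omega_1 \in \Omega^{N+1}(X) = 0$. The step I expect to be the real obstacle — and the reason the author presumably includes the rationality hypothesis — is whether one wants the stronger conclusion that the deformation can be realized with \emph{prescribed} lower-order data or in a way compatible with the integral structure (e.g. deforming an actual geometric object such as an LCS structure rather than just a cohomology class); in that case one genuinely needs the mapping-torus picture and the global primitive coming from $f\colon X\to S^1$, and the Künneth/Wang-sequence bookkeeping on the mapping torus of the monodromy is where the care is required. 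For the cohomological statement as literally stated, however, I would simply present the degree-truncation argument above and remark that the rationality hypothesis is not needed — or, if it is needed, it must be because $V^1_\mu$ as defined via de Rham representatives secretly requires $\mu$ integral for $\eta$ to be chosen with the stated support/period properties, a subtlety I would want to double-check against Definition \ref{def:L-jet_deform} before finalizing.
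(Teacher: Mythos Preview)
Your degree argument contains a fatal error. You claim $\eta \wedge \omega_1$ is an $(N+1)$-form, but this is false: since $|\eta| = 1$ and $|\omega_0| = N-1$, the equation $d\omega_1 = \eta \wedge \omega_0$ forces $\omega_1 \in \Omega^{N-1}(X)$, so $\eta \wedge \omega_1 \in \Omega^N(X)$, not $\Omega^{N+1}(X)$. More generally, because $k=1$ the degree increment $k-1$ is zero, and \emph{every} $\omega_j$ lives in $\Omega^{N-1}(X)$; the degree never climbs past $N$, so there is no truncation for free. This is precisely the case excluded in the paper's previous proposition (which requires $k>1$ odd to get a finite $L_0$), and it is the whole point of Proposition~\ref{prop:1-to-infty} that something genuinely nontrivial is happening when $k=1$.

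The paper's proof is quite different from anything you sketched and uses the rationality hypothesis in an essential way: since $\mu$ is rational, a nonzero multiple is $PD(Z)$ for an integral class $Z \in H_{N-1}(X)$, and in codimension~1 such a $Z$ is always represented by a smooth embedded hypersurface $Y$. The condition $\mu \wedge \alpha = 0$ is then, by the discussion of Example~\ref{exam:top_codim}, exactly the statement that $Y$ is exact with respect to $\alpha$. Now Theorem~\ref{thm:main} (the main theorem, which produces $\infty$-jet deformability from the existence of an exact submanifold) runs \emph{backwards} to give $\alpha \in V^\infty_{PD(Z)}$, and scale invariance finishes. So the argument is geometric, not a degree count, and rationality is genuinely needed to produce the hypersurface $Y$.
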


\begin{proof}
	We have that there is some rational number $c \neq 0$ and some integral homology class $Z \in H_{N-1}(X)$ such that $\mu = c\cdot PD(Z)$.In this degree, every such $Z$ is represented by a smooth hypersurface. Let $Y$ be one such hypersurface. By the discussion of Example \ref{exam:top_codim}, the condition $\mu \wedge \alpha = c \cdot PD(Z) \wedge \alpha = 0$ is equivalent to the condition that $Y$ is exact. By Theorem \ref{thm:main}, it follows that $\alpha$ is $\infty$-jet deformable in the direction of $PD(Z)$, and the scale invariance of Proposition \ref{prop:scale_change} hence implies that $\alpha$ is $\infty$-jet deformable in the direction of $\mu$.
\end{proof}

\begin{rmk}
	We leave open in the case $k=1$ the question of whether for any fixed manifold $X$, there is a finite constant $L_X<\infty$ such that $\alpha \in V^{\infty}_{\mu}(X)$ if and only if $\alpha \in V^{L_X}_{\mu}(X) \in H^*(X;\R)$ for every class $\mu \in H^k(X;\R)$.
\end{rmk}

\color{black}

\subsection{Algebraic generalities} \label{ssec:general}

The algebraic package built in this section works quite generally. Suppose that $(\scr{A}^{\bullet},d^{\scr{A}})$ is a dg-algebra (cohomologically graded) over a field $\F$ of characteristic $0$, and that $(\scr{M}^{\bullet},d^{\scr{M}})$ is a $(\scr{A}^{\bullet},d^{\scr{A}})$-dg-module. Then for any closed element $\eta \in \scr{A}^k$, we may form the cochain complexes
$$\scr{D}^L_{\eta}(\scr{M}^{\bullet},d^{\scr{M}}) := \left(\scr{M}^{\bullet} \otimes_{\F} \quot{\F\llbracket t \rrbracket}{t^{(L+1)}},(d^{\scr{M}})^{L}_{t\eta}\right)$$
in the same way, with the understanding that $|t|=1-k$ and therefore $t^2=0$ if $k$ is even. We may as before also take $L = \infty$. These complexes are always naturally dg-modules over the dg-algebra $\scr{D}^L_{0}(\scr{A}^{\bullet},d^{\scr{A}})$. One still has truncation maps, gauge change isomorphisms, and scale change isomorphisms, all such that Proposition \ref{prop:cochain_map_properties} naturally generalizes to this setting. We therefore arrive at a notion of $L$-jet deformability for classes in $H^r(\scr{M}^{\bullet},d^{\scr{M}})$ in the direction of the class $[\eta] \in H^{k}(\scr{A}^{\bullet},d^{\scr{A}})$.

Of particular geometric interest is the case that $(\scr{A}^{\bullet},d^{\scr{A}}) = (\Omega^*(X),d)$ is the usual de Rham complex (with underlying field $\F = \R$), and $(\scr{M}^{\bullet},d^{\scr{M}}) = (\Omega^*(X;E),d_{\nabla})$ consists of differential forms of sections of a vector bundle $\pi \colon E \rightarrow X$ with a flat connection $\nabla$. For example, if $E$ is the trivial line bundle, but the holonomy of $\nabla$ is nontrivial, then one obtains questions about what might be called `locally conformal' (or perhaps more precisely, `locally homotetic') differential forms. This includes the case of locally conformal symplectic geometry; c.f. the aforementioned work of Apostolov--Dloussky \cite[Proposition 4.2]{AD} for a particular case of the deformation problem. For all such examples of cohomology classes $\alpha \in H^r(\Omega^*(X;E),d_{\nabla})$, there is an obvious notion of exact submanifolds, and one obtains an analogue of Theorem \ref{thm:main} which we omit in this article.

\section{Symplectic applications} \label{sec:symp_app}

Recall that a symplectic manifold $(X,\omega)$ is a smooth manifold $X$ with a closed $2$-form $\omega \in \scr{Z}^2(X)$ which is non-degenerate, meaning that the interior product $\iota_{\bullet}\omega \colon TX \rightarrow T^*X$ given by $v \mapsto \iota_{v}\omega$ is a bundle isomorphism. Equivalently, $X$ has even dimension $N=2n$ and $\omega^n$ is nowhere vanishing.

One particularly useful type of hypersurface in symplectic geometry (and especially neck-stretching arguments in symplectic field theory, c.f. \cite{EGH,BEHWZ}) is known as a contact-type hypersurface.
\begin{defn}
	A hypersurface of a symplectic manifold $H^{2n-1} \subset (X^{2n},\omega)$ is said to be of \textbf{contact-type} if there exists some $\alpha \in \Omega^1(H)$ such that the following two properties hold:
	\begin{itemize}
		\item $d\alpha = \omega|_H$
		\item $\alpha \wedge d\alpha^{n-1}$ is nowhere vanishing
	\end{itemize}
\end{defn}
By the first property, it is clear that any contact-type hypersurface is automatically exact. In the rest of this section, we present three classes of symplectic manifolds admitting no non-separating exact hypersurfaces, and hence also no non-separating contact-type hypersurfaces: K\"ahler manifolds, symplectically uniruled manifolds, and the Kodaira--Thurston manifold.

\begin{rmk}
	It is known that there are closed symplectic manifolds admitting non-separating contact-type hypersurfaces, though these are typically constructed using h-principle techniques, which often leads to existence results without concrete examples. The main construction is due to Etnyre and outlined in work of Albers--Bramham--Wendl \cite[Example 1.3]{ABW} and Wendl \cite[Section 1.3.1]{Wendl}.
\end{rmk}

\begin{rmk}
	Other results related to obstructions for non-separating contact-type hypersurfaces appear in work of Albers--Bramham--Wendl \cite{ABW} as well as Mukherjee \cite{Mu}. We leave open whether their geometric restrictions on the types of hypersurfaces which appear in their hypotheses imply non-vanishing obstructions of the type in this article, which would yield an alternative proof of their results.
\end{rmk}

\subsection{K\"ahler manifolds}

Recall that a K\"ahler manifold $(X,\omega,J)$ is a manifold $X$ together with a symplectic form $\omega$ and an integrable complex structure $J$ which are compatible, meaning:
\begin{itemize}
	\item $\omega(v,Jv) > 0$ for all $v \neq 0$
	\item $\omega(Jv,Jw) = \omega(v,w)$
\end{itemize}
We call a cohomology class $\alpha \in H^2(X;\R)$ a K\"ahler class if it can be realized as the cohomology class $[\omega]$ for a K\"ahler structure $(\omega,J)$. Note that because K\"ahler manifolds are symplectic/complex, they are automatically even-dimensional, so it is typical to write $N = 2n$ for the dimension.

K\"ahler manifolds satisfy the Hard Lefschetz theorem, which states that for any $k \leq n$, wedge product with $[\omega]^{n-k}$ yields an isomorphism
$$H^{k}(X;\R) \cong H^{2n-k}(X;\R).$$
In particular, for any $\ell \leq n-k$, wedge product with $[\omega]^{\ell}$ is injective as a map $H^k(X;\R) \rightarrow H^{k+2\ell}(X;\R)$. We obtain the following theorem.

\begin{thm} \label{thm:Kaehler}
	Suppose $X^{2n}$ is a manifold with $\alpha \in H^2(X;\R)$ a K\"ahler class. For $k \leq n$ and $\ell \leq n-k$, any submanifold of codimension $k$ which is exact with respect to $\alpha^{\ell}$ is automatically torsion in homology $H_{2n-k}(X)$. In particular, taking $k=1$, there is no non-separating hypersurface which is exact with respect to $\alpha^{\ell}$ for $\ell \leq n-1$.
\end{thm}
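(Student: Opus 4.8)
The plan is to reduce everything to Corollary \ref{cor:inject_is_torsion}, which already packages the relevant consequence of the first (purely cup-product) obstruction in Theorem \ref{thm:main}: if multiplication by a class is injective on $H^k$, then no nonzero multiple of any codimension-$k$ homology class can have an exact representative, so such classes are torsion. Concretely, I would apply that corollary with the degree-$2\ell$ class $\alpha^\ell \in H^{2\ell}(X;\R)$ in place of the class called $\alpha$ there; it then suffices to check that the multiplication map $\Phi_{\alpha^\ell} \colon H^k(X;\R) \to H^{k+2\ell}(X;\R)$, $\mu \mapsto \alpha^\ell \wedge \mu$, is injective.

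First I would record that $X$ is automatically closed and oriented, so that Corollary \ref{cor:inject_is_torsion} applies: closed because ``K\"ahler class'' presupposes a compact K\"ahler manifold, and oriented because the complex structure canonically orients $X$. Next I would invoke the Hard Lefschetz theorem for a K\"ahler form $\omega$ with $[\omega] = \alpha$: in the range $\ell \leq n-k$, wedge product with $[\omega]^\ell = \alpha^\ell$ is injective on $H^k(X;\R)$. This is exactly the injectivity statement already extracted from Hard Lefschetz in the paragraph preceding the theorem; the one-line justification is that $[\omega]^{n-k} = [\omega]^{n-k-\ell}\wedge [\omega]^\ell$ and the full power $[\omega]^{n-k}$ is an isomorphism $H^k(X;\R) \cong H^{2n-k}(X;\R)$, so its factor $[\omega]^\ell$ must be injective. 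Hence $\Phi_{\alpha^\ell}$ is injective and Corollary \ref{cor:inject_is_torsion} yields that every codimension-$k$ submanifold exact with respect to $\alpha^\ell$ is torsion in $H_{2n-k}(X)$.

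Finally I would specialize to $k=1$: the integral group $H_{2n-1}(X) \cong H^1(X)$ is torsion-free, so a torsion class there is zero, and a hypersurface represents the zero class if and only if it is separating; thus there is no non-separating hypersurface exact with respect to $\alpha^\ell$ when $\ell \leq n-1$. This last dichotomy is precisely the one already used inside the proof of Corollary \ref{cor:inject_is_torsion}, so nothing new is needed.

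I do not expect a genuine obstacle: the substance lies entirely in Corollary \ref{cor:inject_is_torsion} (equivalently, in the first obstruction of Theorem \ref{thm:main}) together with classical Hard Lefschetz. The only point demanding a little care is the degree bookkeeping — applying the corollary to the class $\alpha^\ell$ of homogeneous degree $2\ell$ rather than to $\alpha$ itself, and matching the hypothesis $\ell \leq n-k$ with the exact range in which Hard Lefschetz gives injectivity rather than merely the top isomorphism.
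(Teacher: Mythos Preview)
Your proposal is correct and follows essentially the same approach as the paper: invoke the injectivity of wedging with $\alpha^{\ell}$ on $H^k(X;\R)$ coming from Hard Lefschetz, then apply Corollary~\ref{cor:inject_is_torsion}. The extra remarks you include (closed/oriented, the factorization $[\omega]^{n-k}=[\omega]^{n-k-\ell}\wedge[\omega]^{\ell}$, and the $k=1$ specialization) are just the details implicit in the paper's one-line proof.
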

\begin{proof}
	By the Hard Lefschetz theorem, wedge product with $\alpha^{\ell}$ is injective as a map $H^j(X;\R)\rightarrow H^{j+2\ell}(X;\R)$, and hence Corollary \ref{cor:inject_is_torsion} applies.
\end{proof}

\begin{cor} \label{cor:Kaehler}
	K\"ahler manifolds of dimension $\geq 4$ admit no non-separating exact hypersurfaces, and in particular no non-separating contact-type hypersurfaces.
\end{cor}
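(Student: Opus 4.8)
The plan is to apply Theorem~\ref{thm:Kaehler} with $\ell = 1$. The statement to prove is that a K\"ahler manifold $X$ of (real) dimension $\geq 4$, i.e.\ with $n \geq 2$, admits no non-separating exact hypersurface, where ``exact'' here means exact with respect to a K\"ahler class $\alpha = [\omega] \in H^2(X;\R)$. Since $X$ is symplectic it carries at least one K\"ahler class to play the role of $\alpha$; the hypersurface is taken to be exact with respect to this very class.

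First I would note that for $n \geq 2$ we have $k = 1 \leq n$ and $\ell = 1 \leq n - 1 = n - k$, so the hypotheses of Theorem~\ref{thm:Kaehler} are met with $\alpha^{\ell} = \alpha$ itself. Applying that theorem, any hypersurface $Y \subset X$ which is exact with respect to $\alpha$ must be torsion in $H_{2n-1}(X)$. Now $H_{2n-1}(X;\Z) \cong H^1(X;\Z)$ by Poincar\'e duality, and $H^1$ of any space is torsion-free; hence $[Y]$ being torsion forces $[Y] = 0 \in H_{2n-1}(X)$. But an embedded oriented hypersurface represents the zero homology class precisely when it is separating (this is the standard fact recorded in the proof of Corollary~\ref{cor:inject_is_torsion}). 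Therefore $Y$ is separating, which is the contrapositive of what we want. Finally, since every contact-type hypersurface is exact (immediate from the defining property $d\alpha = \omega|_H$, as remarked just before the K\"ahler subsection), the statement about contact-type hypersurfaces follows a fortiori.

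Strictly speaking there is no hard part here: the corollary is a direct specialization of Theorem~\ref{thm:Kaehler}, whose own proof merely combines the Hard Lefschetz theorem (injectivity of $\cup \alpha \colon H^1(X;\R) \to H^3(X;\R)$ when $n \geq 2$) with Corollary~\ref{cor:inject_is_torsion}. The only point that deserves a word of care is the dimension bookkeeping: one must check that the range $\ell \leq n - k$ is nonempty for $k = 1$, which is exactly the condition $n \geq 2$, i.e.\ $\dim X \geq 4$ — precisely the hypothesis in the corollary. In dimension $2$ the statement would fail (a K\"ahler surface is a Riemann surface, any non-separating simple closed curve is automatically exact with respect to any $2$-form since $H^2$ of a curve vanishes... wait, $H^2$ of a closed surface does not vanish; rather the issue is that Hard Lefschetz gives no constraint because there is no room to wedge), so emphasizing $n \geq 2$ is the substance of the proof.
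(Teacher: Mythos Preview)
Your proof is correct and follows the same approach as the paper, which simply reads ``Apply Theorem~\ref{thm:Kaehler} with $k=\ell=1$.'' Your additional unpacking of why torsion in $H_{2n-1}(X)$ forces $[Y]=0$ is already absorbed into the ``in particular'' clause of Theorem~\ref{thm:Kaehler}, so it is redundant here; and the closing digression about dimension~$2$ (including the mid-sentence ``wait'') should be trimmed, but none of this affects correctness.
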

\begin{proof}
	Apply Theorem \ref{thm:Kaehler} with $k=\ell = 1$.
\end{proof}

\subsection{Symplectically uniruled manifolds} \label{ssec:symp_uni}

Recall that on a symplectic manifold, one may define \textbf{Gromov--Witten invariants} of the form
$$\mathrm{Gr}_{g,m,A}^{(X,\omega)} \in H_d(\overline{\scr{M}}_{g,m} \times X^m;\Q),$$
where $g \in \Z_{\geq 0}$, $m \in \Z_{\geq 0}$, $A \in H_2(X)$, $\overline{\scr{M}_{g,m}}$ the Deligne--Mumford moduli space of stable nodal Riemann surfaces of genus $g$ with $m$ marked points, and the degree $d$ on the right-hand side is determined by the values $g$, $m$, and $A$. Intuitively, the Gromov-Witten invariant is defined by studying the (compactified) moduli space of genus $g$ pseudo-holomorphic maps with $m$ marked points representing the homology class $A$ (for a generic choice of compatible almost complex structure $J$ on $(X,\omega)$). The expected dimension of the moduli space of pseudo-holomorphic maps is $d$ (called the virtual dimension), and the moduli space comes with an obvious evaluation map to $\overline{\scr{M}}_{g,m} \times X^m$, so the push-forward of the fundamental class of the moduli space is the desired Gromov--Witten invariant. 

Given homology classes $\alpha_1,\ldots,\alpha_m \in H_*(X;\Q)$ and $\beta \in H_*(\overline{\scr{M}}_{g,m})$ of total codimension $d$, one obtains by intersection product an element in $H_0(\overline{\scr{M}}_{g,m} \times X^m;\Q) \cong \Q$, for which we write the value, also called a \textbf{Gromov--Witten invariant},
$$\mathrm{Gr}_{g,m,A}^{(X,\omega)}(\alpha_1,\ldots,\alpha_m;\beta) \in \Q,$$
Intuitively, these count the number of pseudo-holomorphic maps from a genus $g$ Riemann surface with $m$ marked point such that those $m$ marked points evaluate to points on generic representatives of the $\alpha_j$ and such that the conformal class of the fundamental class of the corresponding moduli space pushes forward to $\beta$. (It is typically easiest to think about the case where $\beta$ is itself the fundamental class of $H^*(\overline{\scr{M}}_{g,m})$.)

\begin{rmk}
One must provide analytical details in order to make this precise, for which there are many strategies \cite{CM,FO,HWZ,LT,Pardon,Ruan}. In an effort to keep this paper shorter (and because the focus of the paper is not on these technical details), we shall assume (without further statement) that $[\omega] \in H^2(X;\R)$ is an integral cohomology class, so that we may use the strategy of Cieliebak and Mohnke \cite{CM}. We will only use it in the proof of Lemma \ref{lem:symp_uniruled_H3}, which is outsourced to work of Wendl \cite{Wendl} who uses the Cieliebak--Mohnke machinery.
\end{rmk}

\begin{defn}
	A symplectic manifold $(X,\omega)$ is said to be \textbf{symplectically uniruled} if there exists some $m \geq 3$, $A \in H_2(X)$, $\alpha_2,\ldots,\alpha_m \in H_*(X;\Q)$, and $\beta \in H_*(\overline{\scr{M}}_{0,m};\Q)$, such that
	$$\mathrm{Gr}_{g,m,A}^{(X,\omega)}([pt],\alpha_2,\ldots,\alpha_m;\beta) \neq 0 \in \Q$$
	where $[pt] \in H_0(X;\Q)$ is the homology class of a point.
\end{defn}

Wendl \cite{Wendl} proved that such manifolds do not admit any non-separating contact-type hypersurfaces. In fact, he proves a slightly more general result, for which we introduce a definition.

\begin{defn}
	A hypersurface of a symplectic manifold $H^{2n-1} \subset (X^{2n},\omega)$ is said to be \textbf{pseudoconvex} if there exists a compatible almost complex structure $J$ such that $\xi := TH \cap JTH \leq TH$ induces a contact structure on $H$, i.e. for any nonzero $1$-form $\alpha \in \Omega^1(H)$ with $\ker \alpha = \xi$, we have $\alpha \wedge d\alpha^{n-1}$ is a volume form.
\end{defn}

\begin{thm}[Wendl \cite{Wendl}] \label{thm:Wendl_pseudoconvex}
	If $(X,\omega)$ is a closed symplectic manifold that is symplectically uniruled, then there exists no non-separating pseudoconvex hypersurface.
\end{thm}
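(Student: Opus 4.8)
\textbf{Proof plan for Theorem~\ref{thm:Wendl_pseudoconvex} (following Wendl).}
The plan is to reduce the non-existence of a non-separating pseudoconvex hypersurface to a contradiction with the symplectic uniruledness hypothesis, via a neck-stretching argument in the style of symplectic field theory. First I would suppose for contradiction that $H \subset (X,\omega)$ is a non-separating pseudoconvex hypersurface, and fix a compatible almost complex structure $J$ for which $\xi = TH \cap JTH$ is a contact structure on $H$. Since $H$ is non-separating, cutting $X$ along $H$ yields a connected symplectic cobordism $W$ whose two boundary components are identified copies of $(H,\xi)$; this is precisely the configuration that a neck-stretching argument exploits, because one may glue in arbitrarily long cylindrical necks $[-T,T] \times H$ and take $T \to \infty$.

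Next I would bring in the Gromov--Witten invariant witnessing uniruledness: by hypothesis there exist $m \geq 3$, a class $A \in H_2(X)$, constraints $\alpha_2,\dots,\alpha_m$, and $\beta \in H_*(\overline{\scr{M}}_{0,m};\Q)$ with $\mathrm{Gr}_{0,m,A}^{(X,\omega)}([pt],\alpha_2,\dots,\alpha_m;\beta) \neq 0$. Using the Cieliebak--Mohnke framework (legitimate here since we have assumed $[\omega]$ integral), this count is realized by an honest finite collection of genus-zero $J$-holomorphic spheres through a generic point $p$, with the marked-point and $\beta$ constraints imposed. The key geometric input is that the point constraint $[pt]$ can be placed anywhere, and in particular deep inside the neck region; as $T \to \infty$ the moduli space of such curves undergoes SFT-type compactness, breaking into a holomorphic building with levels in the symplectization $\R \times H$ and in the two cut pieces. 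A genus-zero curve passing through a point in the neck must have at least one component in the symplectization level that is a non-constant finite-energy punctured sphere asymptotic to Reeb orbits of $(H,\xi)$; tracking energy and homology, the non-separating nature of $H$ forces an obstruction to closing up this building back into a sphere in class $A$ — roughly, the building cannot have the right asymptotics to cap off, or equivalently one produces a non-constant finite-energy plane or a configuration of positive $\omega$-energy that cannot exist in the exact symplectization. This contradicts the non-vanishing of the Gromov--Witten count.

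The main obstacle, and the part genuinely due to Wendl, is making the compactness-and-contradiction step rigorous: one must set up the virtual moduli theory (transversality, coherent orientations, obstruction bundles) so that the Gromov--Witten number is literally computed by the stretched count, control the possible degenerations of genus-zero buildings through a neck point, and extract the impossibility of the asymptotic matching. Since the excerpt explicitly outsources this to Wendl~\cite{Wendl} — ``We will only use it in the proof of Lemma~\ref{lem:symp_uniruled_H3}, which is outsourced to work of Wendl'' — I would not reprove it here. Instead the proof in this article should simply cite \cite{Wendl} for the pseudoconvex statement, note that contact-type hypersurfaces are pseudoconvex (take $J$ adapted to the Liouville field, so that $\xi = \ker(\alpha|_H)$ and $\alpha \wedge d\alpha^{n-1}$ is a volume form by the contact-type condition), and thereby record Theorem~\ref{thm:Wendl_pseudoconvex} as the tool to be combined with Theorem~\ref{thm:main} in the next subsection.
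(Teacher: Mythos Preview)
Your proposal correctly identifies the essential point: the paper does not prove Theorem~\ref{thm:Wendl_pseudoconvex} at all. It is stated with attribution to Wendl and no proof is given; the only use made of Wendl's paper in this article is the citation of \cite[Lemma~3.2]{Wendl} inside the proof of Lemma~\ref{lem:symp_uniruled_H3}. So your conclusion in the final paragraph --- that the correct ``proof'' here is simply to cite \cite{Wendl} --- matches exactly what the paper does.

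Your neck-stretching sketch is a reasonable outline of the sort of argument one would expect, but it goes beyond anything the paper attempts, and it is worth noting that the one piece of Wendl's work the paper actually invokes (the $S^1 \times S^2$ family of spheres in Lemma~\ref{lem:symp_uniruled_H3}) suggests a somewhat different mechanism than the SFT breaking you describe: rather than degenerating curves through a neck, Wendl produces a loop of pseudo-holomorphic spheres dual to the hypersurface and extracts an intersection-theoretic obstruction. In any case, neither this paper nor your task requires reproducing that argument.
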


We may complement this by extending Wendl's theorem to exact hypersurfaces. The key lemma follows essentially from Wendl's work.

\begin{lem} \label{lem:symp_uniruled_H3}
	If $(X^{2n},\omega)$ is symplectically uniruled, then $\Phi_{\omega} \colon H^1(X;\R) \rightarrow H^3(X;\R)$, given by wedge product with $[\omega]$, is injective.
\end{lem}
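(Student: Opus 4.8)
The statement to prove is that for a symplectically uniruled closed symplectic manifold $(X^{2n},\omega)$, the cup-product map $\Phi_{\omega}\colon H^1(X;\R)\to H^3(X;\R)$ is injective. This is precisely the algebraic input needed to invoke Corollary \ref{cor:inject_is_torsion} in the $k=1$ case. My approach would be to argue by contradiction: suppose there is a nonzero $\mu\in H^1(X;\R)$ with $\mu\wedge[\omega]=0$ in $H^3(X;\R)$, and derive from this a non-separating exact (in fact pseudoconvex) hypersurface, contradicting Theorem \ref{thm:Wendl_pseudoconvex}.

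First I would reduce to a rational (indeed integral) class. Since $\mu\neq 0\in H^1(X;\R)$ and $H^1(X;\Q)\to H^1(X;\R)$ has dense image (with $H^1(X;\R)$ a finite-dimensional vector space spanned by rational classes), I can perturb $\mu$ slightly to a rational class $\mu'$; but I must be careful that the condition $\mu'\wedge[\omega]=0$ is not generically preserved under perturbation. The cleaner route: the kernel of $\Phi_\omega$ is a rational subspace of $H^1(X;\R)$ (it is the complexification/extension-of-scalars of $\ker(\Phi_\omega\colon H^1(X;\Q)\to H^3(X;\Q))$, because $[\omega]$ can be taken integral by our standing assumption and cup product commutes with extension of scalars). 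Hence if $\ker\Phi_\omega\neq 0$ over $\R$, it contains a nonzero rational class, and after clearing denominators, a nonzero integral primitive class $\mu\in H^1(X;\Z)$ with $\mu\wedge[\omega]=0\in H^3(X;\Q)$. Such an integral class is Poincaré dual to an integral class $Z\in H_{2n-1}(X)$, which is represented by a closed oriented hypersurface $Y$ (realizability is automatic in codimension one, as used in Proposition \ref{prop:1-to-infty}); and since $\mu\neq 0$, $Y$ is non-separating.

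Next I would upgrade $Y$ to a pseudoconvex non-separating hypersurface. The condition $PD(Z)\wedge[\omega]=0$ says exactly that $[\omega|_Y]=0\in H^2(Y;\R)$, i.e. $Y$ is exact; but Theorem \ref{thm:Wendl_pseudoconvex} requires pseudoconvexity, which is strictly stronger than exactness. This upgrading is the step I expect to be the main obstacle, and it is exactly where one must lean on Wendl's machinery rather than reprove it: the point is that within a homology class admitting an exact representative, and using the flexibility in choosing $Y$ within its isotopy/homology class together with the fact that $\omega|_Y$ being exact lets one deform $Y$ to be tangent to a suitable almost complex structure (this is the content Wendl extracts from the Cieliebak--Mohnke setup, and the reason the lemma is stated as following "essentially from Wendl's work"), one produces a pseudoconvex representative. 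Concretely, the expected argument: a hypersurface with $[\omega|_Y]=0$ can be $C^\infty$-perturbed, using an adapted almost complex structure $J$, so that $TY\cap JTY$ is a contact structure; the obstruction to doing so is cohomological and vanishes precisely because $\omega|_Y$ is exact.

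Finally, once a non-separating pseudoconvex $Y$ is in hand, Theorem \ref{thm:Wendl_pseudoconvex} is contradicted, so no such $\mu$ exists and $\Phi_\omega$ is injective. I would present the rational-reduction and realizability steps in detail (they are short), state the pseudoconvex-representative step as the invocation of Wendl's argument with a precise pointer to the relevant part of \cite{Wendl}, and conclude. The overall logical shape is thus: $\ker\Phi_\omega\neq 0 \Rightarrow$ nonzero integral $PD(Z)$ annihilating $[\omega]$ $\Rightarrow$ non-separating exact hypersurface $\Rightarrow$ (via Wendl) non-separating pseudoconvex hypersurface $\Rightarrow$ contradiction with Theorem \ref{thm:Wendl_pseudoconvex}.
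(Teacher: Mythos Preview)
Your argument has a genuine gap at step~4: upgrading an exact hypersurface to a pseudoconvex one. This step is simply false in general, and it is not what Wendl proves or what the Cieliebak--Mohnke setup provides. Exactness of $Y$ is a purely cohomological condition ($[\omega|_Y]=0$), while pseudoconvexity is a pointwise geometric condition on how $J$ sits relative to $TY$; there is no h-principle or perturbation lemma that produces a pseudoconvex representative in a given homology class merely from the vanishing of $[\omega|_Y]$. Indeed, if such an upgrade were available, Theorem~\ref{thm:symp_uniruled} would follow immediately from Theorem~\ref{thm:Wendl_pseudoconvex} without passing through this lemma at all, and the paper's extension from ``pseudoconvex'' to ``exact'' would be vacuous.

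The paper's proof takes a different and more direct route: it does not touch Theorem~\ref{thm:Wendl_pseudoconvex}. Instead it uses a specific ingredient from inside Wendl's argument (his Lemma~3.2), which, given any nonzero $Z\in H_{2n-1}(X)$, produces a map $\Phi\colon S^1\times S^2\to X$ whose $S^2$-fibers are $J$-holomorphic spheres in the class $A$ and whose $S^1$-factor links $Z$ nontrivially. One then computes directly via K\"unneth that $\Phi^*(PD(Z)\wedge[\omega])\neq 0$ in $H^3(S^1\times S^2;\R)$, hence $PD(Z)\wedge[\omega]\neq 0$. So the correct invocation of Wendl is a \emph{cycle construction} via families of pseudo-holomorphic spheres, not a hypersurface deformation; your rational-reduction and realizability steps are fine but ultimately unnecessary for this approach.
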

\begin{proof}
	Suppose $Z \neq 0 \in H_{2n-1}(X)$, so that $PD(Z) \neq 0 \in H^1(X)$. In his paper, Wendl \cite[Lemma 3.2]{Wendl} proves that one may find a map $\Phi \colon S^1 \times S^2 \rightarrow (X,\omega)$ with the properties that:
	\begin{itemize}
		\item $\Phi_*[S^1 \times \{*\}] \bullet Z \neq 0 \in H_0(X;\Q)$
		\item The restriction to each fiber of $\Phi$ is a pseudo-holomorphic (nodal) sphere $S^2 \rightarrow (X,\omega,J)$ with respect to some compatible almost complex structure $J$ in the homology class $A$ (i.e. the homology class appearing in the nonzero Gromov--Witten invariant).
	\end{itemize}
	The first point may be equivalently formulated as the fact that $\Phi^*(PD(Z))|_{S^1 \times \{*\}} \neq 0$. The second gives that $\Phi^*([\omega])|_{\{*\} \times S^2} \neq 0$ (and is the class of area $\langle \omega, A \rangle$). By the K\"unneth theorem, we have that $H^3(S^1 \times S^2;\R) = H^1(S^1;\R) \otimes H^2(S^2;\R)$. In particular, the element $\Phi^*(PD(Z) \wedge [\omega]) \in H^3(S^1 \times S^2;\R)$ is identified with the nonzero element $\Phi^*(PD(Z))|_{S^1} \otimes \Phi^*([\omega])|_{S^2} \neq 0$. Hence,
	$$\langle PD(Z) \wedge [\omega], \Phi_*([S^1 \times S^2]) \rangle \neq 0,$$
	and so $PD(Z) \wedge [\omega] \neq 0$. It follows that $\Phi_{\omega}(PD(Z)) \neq 0$ for integral classes $Z$. Tensoring with $\R$, we obtain the injectivity of $\Phi_{\omega}$.
\end{proof}

\begin{thm} \label{thm:symp_uniruled}
	If $(X,\omega)$ is a closed symplectic manifold that is symplectically uniruled, then there exists no non-separating exact hypersurface.
\end{thm}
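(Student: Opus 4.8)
The plan is to deduce the theorem formally from results already established, with essentially no new work: the statement is precisely the $k=1$ case of the general principle in Corollary~\ref{cor:inject_is_torsion}, fed by the injectivity supplied by Lemma~\ref{lem:symp_uniruled_H3}.

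Concretely, first I would record that $[\omega] \neq 0 \in H^2(X;\R)$ for any closed symplectic manifold, since $\langle [\omega]^n, [X]\rangle \neq 0$; thus $\alpha := [\omega]$ is a genuine homogeneous degree-$2$ class of the kind to which Theorem~\ref{thm:main} and its corollaries apply. By Lemma~\ref{lem:symp_uniruled_H3}, symplectic uniruledness guarantees that the cup-product map $\Phi_{[\omega]} \colon H^1(X;\R) \to H^3(X;\R)$, $\mu \mapsto [\omega] \wedge \mu$, is injective. I would then simply invoke Corollary~\ref{cor:inject_is_torsion} with this choice of $\alpha$ and $k=1$: injectivity of $\Phi_{[\omega]}$ in degree $1$ forces every codimension-one exact submanifold $Y$ to have $PD([Y]) = 0 \in H^1(X;\R)$, hence $[Y]$ torsion in $H_{2n-1}(X) \cong H^1(X)$; but $H^1(X;\Z)$ is torsion-free, so $[Y] = 0$, i.e.\ $Y$ is separating. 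Contrapositively, no non-separating exact hypersurface exists.

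There is no real obstacle in this last step — the entire content of the argument is concentrated in Lemma~\ref{lem:symp_uniruled_H3}, whose proof in turn rests on Wendl's construction of the sphere family $\Phi \colon S^1 \times S^2 \to (X,\omega)$ detecting the nonzero Gromov--Witten invariant. The only point worth stating explicitly in the writeup is that ``exact'' is a strict weakening of ``pseudoconvex'' and of ``contact-type,'' so this genuinely extends Wendl's Theorem~\ref{thm:Wendl_pseudoconvex} to the larger class of exact hypersurfaces; the extension comes for free precisely because Lemma~\ref{lem:symp_uniruled_H3} already distills a purely cohomological consequence, namely $PD(Z) \wedge [\omega] \neq 0$, out of Wendl's geometric input, and that cohomological consequence is exactly the hypothesis needed by Corollary~\ref{cor:inject_is_torsion}.
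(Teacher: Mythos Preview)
Your proposal is correct and follows exactly the paper's own proof, which is the one-line ``Combine Lemma~\ref{lem:symp_uniruled_H3} and Corollary~\ref{cor:inject_is_torsion}.'' Your additional remarks (nonvanishing of $[\omega]$, torsion-freeness of $H^1(X;\Z)$, and the relation to Wendl's result) are accurate elaborations but not new ingredients; the last sentence of Corollary~\ref{cor:inject_is_torsion} already handles the passage from ``torsion'' to ``separating'' in codimension one.
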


\begin{proof}
	Combine Lemma \ref{lem:symp_uniruled_H3} and Corollary \ref{cor:inject_is_torsion}.
\end{proof}

\subsection{The Kodaira--Thurston manifold}

The Kodaira--Thurston manifold is a symplectic $4$-dimensional manifold, initially constructed by Thurston \cite{Thurston} for the purpose of producing a symplectic manifold with no K\"ahler form. Smoothly, it is just the manifold of Example \ref{exam:nilmanifold}. The corresponding symplectic form is just $\omega = AC+BT$ (as a left-invariant form), which is easily checked to be non-degenerate and closed, and hence itself a symplectic form. We are now able to prove our desired result.

\begin{thm}\label{thm:KT} The Kodaira--Thurston symplectic manifold $(X,AC+BT)$ admits no non-separating exact hypersurface.
\end{thm}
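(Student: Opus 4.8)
The plan is to apply Theorem \ref{thm:main} to the Kodaira--Thurston manifold $(X, \omega)$ with $\omega = AC + BT$ and exploit the explicit nilmanifold cohomology from Example \ref{exam:nilmanifold}. Suppose for contradiction that $Y \subset X$ is a non-separating exact hypersurface, representing a class $Z \in H_3(X)$. Since $Y$ is non-separating, $Z \neq 0$, so $PD(Z) \neq 0 \in H^1(X;\R)$. By Theorem \ref{thm:main} (first bullet, $k = 1$ odd), the symplectic class $[\omega] = [AC + BT] \in H^2(X;\R)$ must be $\infty$-jet deformable along $\mu := PD(Z)$, and by scale-invariance (Proposition \ref{prop:scale_change}) I may rescale $\mu$ freely.

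First I would reduce to the case $\mu = [A]$ up to the symmetries of the problem. Using the basis $A, B, T$ for $H^1(X;\R)$ (recall $[C] = 0$), write $\mu = aA + bB + cT$ with $(a,b,c) \neq 0$. The third bullet of Theorem \ref{thm:main} forces $\mu \wedge [\omega] = 0 \in H^3(X;\R)$; computing $(aA + bB + cT)\wedge(AC + BT)$ in the Chevalley--Eilenberg algebra and reducing modulo exact forms ($[ABT] = 0$), this already pins down a constraint on $(a,b,c)$. I expect this first obstruction ($1$-jet deformability) to be \emph{insufficient} — that is why the theorem statement says "second obstruction" — so the real work is at the second stage. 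I would then pass to $2$-jet deformability: I need a closed $\omega_1 \in \Omega^1(X)$ (equivalently a left-invariant $1$-form plus possibly a $d$-exact correction, but by Nomizu's theorem left-invariant representatives suffice on cohomology and I can work at the chain level with left-invariant forms) with $d\omega_1 = \eta \wedge \omega$ and then $\omega_2$ with $d\omega_2 = \eta \wedge \omega_1$, where $\eta$ is a representative of $\mu$. The computation of $V^{2,2}_{[\eta]}$ proceeds exactly as the inductive linear-algebra procedure following Proposition \ref{prop:fin-dim}, run with the varying direction $\eta = aA + bB + cT$ instead of the fixed $\eta = A$ of Example \ref{exam:nilmanifold}.

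The key steps, in order: (1) record $H^*(X;\R)$ and the wedge-product structure from Example \ref{exam:nilmanifold}; (2) compute the first obstruction $\mu \wedge [\omega] = 0$ and solve for the admissible $\mu$; (3) for each such $\mu$, attempt to solve $d\omega_1 = \eta \wedge \omega$ — this determines $\omega_1$ up to a closed $1$-form, i.e. up to $\mathrm{span}(A, B, T)$, and I must track which choices of this closed ambiguity survive to the next stage; (4) compute $\eta \wedge \omega_1$ and test whether it is exact in $\Omega^3(X)$ — an exact $3$-form here lies in $d(\Omega^2) = \mathrm{span}(ABT)$ (since $d(AC) = d(BC) = \cdots$ must be checked: $dC = AB$ so $d(CT) = ABT$, while $AC, AT, BC, BT$ are closed), so exactness of $\eta \wedge \omega_1$ is a concrete linear condition; (5) show this condition cannot be met, giving the contradiction. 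I expect step (4)--(5) — showing the second obstruction is genuinely non-vanishing for \emph{every} admissible nonzero $\mu$ — to be the main obstacle, since it requires carrying the closed-form ambiguity in $\omega_1$ through the computation and checking a dimension count of linear conditions; it is conceivable one must also track the $\Psi^L$-kernel phenomenon noted at the end of Example \ref{exam:nilmanifold} (e.g. $[ACt^2] = 0$) so as not to overcount available deformations. If the direct computation shows some $\mu$ does satisfy $2$-jet deformability, I would push to the third obstruction, but I anticipate that the choice $\omega = AC + BT$ is arranged precisely so that the second obstruction already kills all non-separating candidates.
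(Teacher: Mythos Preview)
Your approach is essentially the paper's proof, but you are over-preparing and have a degree slip. The first obstruction already forces $\mu$ to be a scalar multiple of $[A]$: computing $(aA+bB+cT)\wedge(AC+BT)$ gives $aABT - bABC + cACT$, and since the only exact left-invariant $3$-form is $ABT = d(CT)$, exactness forces $b=c=0$. After this reduction and Proposition~\ref{prop:scale_change}, you are exactly in the case $\eta = A$ already computed in Example~\ref{exam:nilmanifold}, so there is no need to rerun the inductive procedure with a ``varying direction''; simply cite $V^{2,2}_{[A]} = \mathrm{span}([AC],[AT])$ and observe $[AC+BT]\notin V^{2,2}_{[A]}$.

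Separately, your $\omega_1$ lives in $\Omega^2(X)$, not $\Omega^1(X)$ (for $k=1$ all the $\omega_\ell$ have degree $r=2$), and it is not closed but satisfies $d\omega_1 = \eta\wedge\omega$; the ambiguity in $\omega_1$ is therefore by a closed $2$-form, i.e.\ an element of $\mathrm{span}(AB,AC,AT,BC,BT)$, not $\mathrm{span}(A,B,T)$. This would matter if you actually had to carry out the second-obstruction computation by hand, but as noted above you don't.
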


\begin{proof}
	Suppose that there is an exact hypersurface Poincar\'e dual to the cohomology class $[\eta]$ of the form $\eta = pA+qB+rT$. We have that
	$$\eta \wedge \omega = pABT-qABC-rBCT$$
	must be exact by the first obstruction. This implies that $q=r=0$, and hence $\eta = pA$. In Example \ref{exam:nilmanifold}, we proved that
	$$V^{\infty}_{A} = V^{2}_A = \mathrm{span}([AC],[AT]) \leq H^2(X;\R).$$
	We see that $[AC+BT] \notin V^{\infty}_{A}$. By Proposition \ref{prop:scale_change}, we in fact have that
	$$[AC+BT] \notin V^{\infty}_{pA}$$
	for any $p \neq 0$. Hence, we have that any exact hypersurface must be in the trivial homology class, and is therefore separating.
\end{proof}

\bibliography{Bib}{}
\bibliographystyle{plain}

\end{document}